\documentclass[11pt]{article}

\usepackage{amsmath,amsthm,amssymb}
\usepackage[usenames,dvipsnames]{xcolor}
\usepackage{enumerate}
\usepackage{graphicx}
\usepackage{cite}
\usepackage{comment}
\usepackage[margin=1in]{geometry}
\usepackage{bbm}

\usepackage[pdftitle={Closure of LQG},
  pdfauthor={Andres Contreras-Hip and Ewain Gwynne},
colorlinks=true,linkcolor=NavyBlue,urlcolor=RoyalBlue,citecolor=PineGreen,bookmarks=true,bookmarksopen=true,bookmarksopenlevel=2,unicode=true,linktocpage]{hyperref}

\setcounter{tocdepth}{2}

\theoremstyle{plain}
\newtheorem{thm}{Theorem}[section]
\newtheorem{theorem}[thm]{Theorem}
\newtheorem{cor}[thm]{Corollary}
\newtheorem{lemma}[thm]{Lemma}
\newtheorem{prop}[thm]{Proposition}

\def\@rst #1 #2other{#1}
\newcommand\MR[1]{\relax\ifhmode\unskip\spacefactor3000 \space\fi
  \MRhref{\expandafter\@rst #1 other}{#1}}
\newcommand{\MRhref}[2]{\href{http://www.ams.org/mathscinet-getitem?mr=#1}{MR#2}}

\theoremstyle{definition}
\newtheorem{defn}[thm]{Definition}
\newtheorem{remark}[thm]{Remark}

\numberwithin{equation}{section} 

\newcommand{\dsb}{\begin{adjustwidth}{2.5em}{0pt}
\begin{footnotesize}}
\newcommand{\dse}{\end{footnotesize}
\end{adjustwidth}}

\newcommand{\ssb}{\begin{adjustwidth}{2.5em}{0pt}}
\newcommand{\sse}{\end{adjustwidth}}

\newcommand{\aryb}{\begin{eqnarray*}}
\newcommand{\arye}{\end{eqnarray*}}
\def\alb#1\ale{\begin{align*}#1\end{align*}}
\def\allb#1\alle{\begin{align}#1\end{align}}
\newcommand{\eqb}{\begin{equation}}
\newcommand{\eqe}{\end{equation}}
\newcommand{\eqbn}{\begin{equation*}}
\newcommand{\eqen}{\end{equation*}}

\newcommand\e{\varepsilon}

\newcommand\R{\mathbb{R}}
\newcommand\Z{\mathbb{Z}}


\let\originalleft\left
\let\originalright\right
\renewcommand{\left}{\mathopen{}\mathclose\bgroup\originalleft}
\renewcommand{\right}{\aftergroup\egroup\originalright}

\title{Bounds on the distance exponent for higher-dimensional Liouville first passage percolation}
\author{Andres A. Contreras Hip\thanks{University of Chicago}  \qquad \qquad Zijie Zhuang\thanks{University of Pennsylvania}}

\begin{document}

\maketitle

\newcommand{\Cupper}{{\hyperref[eqn-bilip-def]{\mathfrak C_*}}}

\begin{abstract}
For $\xi \geq 0$ and $d \geq 3$, the higher-dimensional Liouville first passage percolation (LFPP) is a random metric on $\epsilon \mathbb{Z}^d$ obtained by reweighting each vertex by $e^{\xi h_\epsilon(x)}$, where $h_\epsilon(x)$ is a continuous mollification of the whole-space log-correlated Gaussian field. This metric generalizes the two-dimensional LFPP, which is related to Liouville quantum gravity. We derive several estimates for the set-to-set distance exponent of this metric, including upper and lower bounds and bounds on its derivative with respect to $\xi$. In the subcritical region for $\xi$, we derive estimates for the fractal dimension and show that it is continuous and strictly increasing with respect to $\xi$. In particular, our result is an important step towards proving a technical assumption made in \cite{cg-support}. These are also the first bounds on the distance exponent for LFPP in higher dimensions.
\end{abstract}

\setcounter{tocdepth} {1}
\tableofcontents



\section{Introduction}

\textit{Liouville first passage percolation} (LFPP) with parameter $\xi \geq 0$ is a family of random metrics on $\mathbb{C}$ obtained by integrating $e^{\xi h_\epsilon}$ along paths, where $h_\epsilon$ is a continuous mollification of the two-dimensional \textit{Gaussian free field} (GFF). It was shown in~\cite{DDDF-tightness, GM-uniqueness, DG-supercritical-tightness, DG-supercritical-unique} that, under appropriate normalization and topology, these metrics converge to a limiting metric as $\epsilon \to 0$, known as the \textit{Liouville quantum gravity} (LQG) metric. We refer to~\cite{Sheffield-ICM, ddg-metric-survey} for reviews on this metric and its relation to random planar maps. One of the most important questions about the LQG metric is to calculate its fractal dimension in the subcritical region. In fact, the problem is equivalent to computing the LFPP distance exponent in the subcritical region (see \cite[Theorem 1.5]{dg-fractal-dim} and \cite[Corollary 1.7]{GP-KPZ-metric}). However, the only explicit value known for the LFPP distance exponent is when $\xi = 1/\sqrt{6}$, where it equals $1/6$. This result follows from Miller and Sheffield's construction of the $\sqrt{8/3}$-LQG metric~\cite{lqg-tbm1, lqg-tbm2, lqg-tbm3} and its connection to the Brownian map~\cite{legall-uniqueness, miermont-brownian-map}. We refer to Theorem~\ref{thm:bound-2d} and Figure~\ref{fig:bounds} (left) for the best known bounds on the LFPP distance exponent as obtained in~\cite{dg-fractal-dim, gp-lfpp, ang-discrete-lfpp}.

Recently, there has been growing interest in studying analogues of LQG in higher dimensions; see e.g.~\cite{Baptiste2022high, Sturm2024high, dgz-exponential-metric}. It is therefore natural to investigate the distance exponent and fractal dimension of the higher-dimensional LFPP, which is the focus of this paper. Let $h$ be the whole-space \textit{log-correlated Gaussian field} (LGF) which is a random generalized function on $\mathbb{R}^d$ satisfying
$$
{\rm Cov}[(h, f_1), (h, f_2)] = \int_{\mathbb{R}^d \times \mathbb{R}^d} f_1(x) f_2(y) \log \frac{1}{|x-y|} dx dy,
$$
where $f_1$ and $f_2$ are Schwartz functions with average 0. For $d = 2$, the LGF coincides with the GFF, whereas for $d \geq 3$, they are different. Throughout this paper, we normalize $h$ so that its average over the unit sphere is 0. We refer to~\cite{DRSV-lgf, fgf-survey} for the well-definedness and basic properties of the LGF. For $\e>0$ and $x \in \mathbb{R}^d$, we define $h_\e(x)$ as the continuous mollification of $h$, given by its average over the box $x + [-\e, \e]^d$; see Remark~\ref{rmk:mollification} for discussions on other mollifications. For a parameter $\xi \geq 0$, the higher-dimensional LFPP is a random metric on the rescaled lattice $\e \mathbb{Z}^d$ obtained by reweighting each vertex by $e^{\xi h_\e(x)}$. Specifically, for a discrete path $P: \{1,2,\ldots, N \} \to \e \mathbb{Z}^d$, the length of $P$ is defined as
\begin{equation}\label{eq:def-length}
    L_h^{\e, \xi} (P) := \sum_{i=1}^{N} \e e^{\xi h_\e(P(i))}.
\end{equation}
For $z, w \in \e \mathbb{Z}^d$, the higher-dimensional LFPP metric is defined as
\begin{equation}\label{eq:def-metric}
    D_h^{\e, \xi} (z,w) := \inf_{P:z \to w} L_h^{\e, \xi} (P),
\end{equation}
where the infimum is taken over all discrete paths $P$ connecting $z$ and $w$ on $\e \mathbb{Z}^d$. For a subset $A$ of $\e \mathbb{Z}^d$, we also define the LFPP metric restricted to this set, denoted by $D_h^{\e, \xi}(\cdot,\cdot; A)$, by requiring the path $P$ in~\eqref{eq:def-metric} to stay in $A$.

Let $\mathbbm{S} = [0,1]^d$ and $\mathbbm{S}^\e = \mathbbm{S} \cap \e \mathbb{Z}^d$. The set-to-set distance exponent characterizes the behavior of the exponential metric, which is defined as
\begin{equation}\label{eq:def-lambda}
\lambda = \lambda(d, \xi) := \sup \{\alpha \in \mathbb{R}: \liminf_{\e \to 0}\mathbb{P}[D_h^{\e, \xi}(\partial_L \mathbbm{S}^\e, \partial_R \mathbbm{S}^\e;  \mathbbm{S}^\e) \leq \e^\alpha] = 1 \},
\end{equation}
where $\partial_L \mathbbm{S}^\e = (\{0\} \times [0,1]^{d-1}) \cap \e \mathbb{Z}^d $ and $\partial_R \mathbbm{S}^\e = (\{1\} \times [0,1]^{d-1}) \cap \e \mathbb{Z}^d $ are the left and right boundaries of $\mathbbm{S}^\e$, respectively. In Section~\ref{subsec:bound}, we give bounds on $\lambda$ and its derivative with respect to $\xi$. In Section~\ref{subsec:fractal-dimension}, we define the fractal dimension in the subcritical region in terms of $\lambda$ and show that it is continuous and strictly increasing with respect to $\xi$.

\begin{remark}\label{rmk:mollification}
    \begin{itemize}
        \item (Different mollifications) In the above definition, we consider the continuous mollification $h_\epsilon$ by averaging $h$ over $[-\e, \e]^d$. In fact, it is easy to see that the set-to-set distance exponent is the same for many different mollifications. For instance, one can consider a mollification obtained by convolving $h$ with $\e^{-d} \rho(\cdot/\e)$, where $\rho$ is any probability measure satisfying $\iint_{\mathbb{R}^d \times \mathbb{R}^d} \log \frac{1}{|x-y|} \rho(dx) \rho(dy)<\infty$. One can also use the mollification defined by the white noise decomposition as considered in~\cite{dgz-exponential-metric, dgz-thick-point}. In fact, it follows from Lemma~\ref{lem:mollification} and \cite[Lemma 4.1]{dgz-thick-point} that all these mollifications of $h$ differ by at most $o(\log \e^{-1} )$ on $\mathbbm{S}^\e$ with probability $1 - o_\e(1)$ as $\e \to 0$. This in particular implies that when $d = 2$, our distance exponent agrees with the one considered in~\cite{dg-fractal-dim, gp-lfpp, ang-discrete-lfpp}. One can also define the LFPP metric by integrating along continuous paths as in~\cite{dgz-exponential-metric}, which does not affect the distance exponent; see \cite[Lemma 3.7]{dgz-exponential-metric}.

        \item (Convergence of the LFPP metric) It is expected that in the subcritical region of $\xi$, after appropriate normalization, the higher-dimensional LFPP metric converges under the uniform topology to a limiting metric that induces the Euclidean topology.  In~\cite{dgz-exponential-metric}, the authors proved the tightness of the higher-dimensional LFPP metric in the subcritical region for a different mollification defined by the white noise decomposition, although the uniqueness of the subsequential limit is still open. The behavior for large $\xi$ is more mysterious. There may be a phase similar to that of the 2D supercritical LQG metric~\cite{DG-supercritical-tightness, DG-supercritical-unique} where the limiting metric has an uncountable, dense set of singular points with zero Lebesgue measure. A recent work~\cite{dgz-thick-point}
        suggests that a new phase may appear for large $\xi$ in sufficiently high dimensions, where the set-to-set distance can be much smaller than the point-to-point distance between typical points. This phase does not occur in two dimensions.
        
    \end{itemize}
\end{remark}

\subsection{Upper and lower bounds on $\lambda$ and $\lambda'$}\label{subsec:bound}

Before stating the bounds on $\lambda$ for $d \geq 3$, let us first recall the best known bounds on $\lambda(2, \xi)$. Define 
$$
\underline{\rho}(\xi) = 
\begin{cases}
\max \left\{(\sqrt{\frac{5}{2}} - \frac{1}{\sqrt{6}}) \xi - \frac{\sqrt{15}-2}{6},0 \right\} \quad &\xi \leq \frac{1}{\sqrt 6},  \\
\max \left\{\frac{1}{4} - \frac{\xi^2}{2},0 \right\} \quad &\xi \geq \frac{1}{\sqrt 6},
\end{cases}
$$
and
$$
\overline{\rho}(\xi) = 
\begin{cases}
\min \left\{  \frac{1}{4} - \frac{\xi^2}{2} ,  \sqrt{2}\xi    \right\} \quad &\xi \leq \frac{1}{\sqrt 6},  \\
\min\left\{  \left(\sqrt{\frac{5}{2}} - \frac{1}{\sqrt 6} \right) \xi  - \frac{\sqrt{15} -2}{6}    ,   1     \right\} \quad &\xi \geq \frac{1}{\sqrt 6} .
\end{cases}
$$
The following bounds on $\lambda(2,\xi)$ are proved in \cite[Theorem 2.3]{gp-lfpp} and \cite[Theorem 1.8]{ang-discrete-lfpp}; see Figure~\ref{fig:bounds} (left).

\begin{theorem}[\cite{gp-lfpp, ang-discrete-lfpp}]\label{thm:bound-2d}
    For all $\xi \geq 0$, we have $\underline{\rho}(\xi) \leq \lambda(2, \xi) \leq \overline{\rho}(\xi)$.
\end{theorem}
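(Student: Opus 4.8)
Since Theorem~\ref{thm:bound-2d} is quoted verbatim from \cite{gp-lfpp, ang-discrete-lfpp}, the plan is to import it rather than reprove it, and the only point I would need to verify is that $\lambda(2,\xi)$ as defined in~\eqref{eq:def-lambda} coincides with the LFPP distance exponent studied in those works. For $d=2$ the whole-space LGF is the whole-plane GFF, so it suffices to know that the exponent is insensitive to (i) the choice of mollifier and (ii) the use of discrete rather than continuum paths. Part (i) is Remark~\ref{rmk:mollification}: by Lemma~\ref{lem:mollification} and \cite[Lemma 4.1]{dgz-thick-point} the mollifications of $h$ under consideration differ by $o(\log\epsilon^{-1})$ uniformly on $\mathbbm{S}^\epsilon$ with probability $1-o_\epsilon(1)$, hence alter each weight $e^{\xi h_\epsilon(x)}$ by a factor $\epsilon^{o(1)}$, which cannot change an exponent; part (ii) is \cite[Lemma 3.7]{dgz-exponential-metric} together with the discrete-to-continuum comparison of \cite{ang-discrete-lfpp}. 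Granting this, $\underline\rho(\xi)\le\lambda(2,\xi)\le\overline\rho(\xi)$ is precisely the combination of \cite[Theorem 2.3]{gp-lfpp} and \cite[Theorem 1.8]{ang-discrete-lfpp}.

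For orientation on the arguments behind the quoted bounds: both $\underline\rho$ and $\overline\rho$ interpolate between two exactly known values. The first is $\lambda(2,0)=0$, which is immediate since at $\xi=0$ the metric is $\epsilon$ times graph distance and the left-to-right crossing distance in $\mathbbm{S}^\epsilon$ equals $1=\epsilon^{o(1)}$. The second is $\lambda(2,1/\sqrt6)=1/6$, which follows from the identification of $\sqrt{8/3}$-LQG with the Brownian map \cite{lqg-tbm1, lqg-tbm2, lqg-tbm3, legall-uniqueness, miermont-brownian-map} together with the equivalence of the LFPP exponent with the LQG fractal dimension \cite[Theorem 1.5]{dg-fractal-dim}, \cite[Corollary 1.7]{GP-KPZ-metric}. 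The lower bounds on $\lambda$ (equivalently, upper bounds on crossing lengths) are obtained by concatenating near-geodesic crossings of sub-squares, exploiting the exact scale-invariance and locality of the log-correlated field together with percolation-style gluing. The upper bounds on $\lambda$ (equivalently, lower bounds on crossing lengths) rest on the fact that a crossing of $\mathbbm{S}$ must traverse of order $\log\epsilon^{-1}$ dyadic scales, on a positive fraction of which $h_\epsilon$ along the path cannot be too negative, combined with moment and Gaussian tail estimates for $h_\epsilon$; these inputs, together with $\mathbb{E}[e^{\xi h_\epsilon(x)}]\asymp\epsilon^{-\xi^2/2}$, are what produce the parabola $1/4-\xi^2/2$ and the bound $\sqrt2\,\xi$, while $\overline\rho\le1$ is the crudest such estimate.

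There is essentially no obstacle on our side: once the mollification and discretization conventions are matched as in the first paragraph --- which is routine given Lemma~\ref{lem:mollification} --- the statement is a direct quotation and no new estimate is required. If there is a hard part at all it is internal to \cite{gp-lfpp, ang-discrete-lfpp}, and at bottom it is the fact that the exact value of $\lambda(2,\xi)$ for $\xi\notin\{0,1/\sqrt6\}$ remains open; Theorem~\ref{thm:bound-2d} simply records the best interpolation between the two known values currently available, and serves here as the two-dimensional benchmark against which the new higher-dimensional bounds of Section~\ref{subsec:bound} are to be compared.
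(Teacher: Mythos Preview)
Your proposal is correct and matches the paper's treatment: Theorem~\ref{thm:bound-2d} is not proved in the paper but simply quoted from \cite[Theorem 2.3]{gp-lfpp} and \cite[Theorem 1.8]{ang-discrete-lfpp}, with the compatibility of the exponent definitions handled exactly as you say via Remark~\ref{rmk:mollification} (Lemma~\ref{lem:mollification} and \cite[Lemma 4.1]{dgz-thick-point}). Your additional paragraph of orientation on how the cited bounds are obtained is extra context not present in the paper, but it is accurate and does no harm.
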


It was also shown in~\cite{ding-goswami-watabiki} that $\lambda(2, \xi) \geq c \xi^{4/3}/\log(\xi^{-1})$ for some constant $c>0$. As mentioned before, one of the major open questions in LQG is to determine the value of $\lambda(2,\xi)$.

The following theorem is our main bound on $\lambda(d,\xi)$; see Figure~\ref{fig:bounds} (right) for the case $d = 3$.

\begin{theorem}\label{thm:bound}
    For all $d \geq 3$ and $\xi \geq 0$, we have $\underline{\rho}(\xi) \leq \lambda(d, \xi) \leq \xi \sqrt{2d-2}$.
\end{theorem}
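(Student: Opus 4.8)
The two bounds in Theorem~\ref{thm:bound} are proved independently.

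\emph{The lower bound $\lambda(d,\xi)\ge\underline{\rho}(\xi)$.} The plan is to reduce to the two-dimensional case of Theorem~\ref{thm:bound-2d} by restricting attention to a $2$-dimensional coordinate plane $\Pi=\mathbb R^2\times\{0\}^{d-2}$. Every nearest-neighbor path in $\Pi\cap\mathbbm{S}^\e$ joining the left and right faces of the unit square in $\Pi$ is a path in $\mathbbm{S}^\e$ from $\partial_L\mathbbm{S}^\e$ to $\partial_R\mathbbm{S}^\e$, so $D_h^{\e,\xi}(\partial_L\mathbbm{S}^\e,\partial_R\mathbbm{S}^\e;\mathbbm{S}^\e)$ is bounded above by the $2$D LFPP crossing distance of $\Pi\cap\mathbbm{S}^\e$ computed with $h_\e|_\Pi$. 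The field $\{h_\e(x)\}_{x\in\Pi}$ is centered Gaussian, and factoring the $d$-dimensional box average through the $2$ planar and the $d-2$ transverse coordinates shows that its covariance equals $\log(1/|x-y|)+o_\e(1)$ for $|x-y|\gg\e$ and equals $\log\e^{-1}+O(1)$ for $|x-y|\lesssim\e$; in particular it differs from the covariance of the standard box mollification $g_\e$ of the whole-plane $2$D GFF by a bounded symmetric kernel. Absorbing this $O(1)$ discrepancy into independent auxiliary Gaussian fields of bounded variance (in the manner of Lemma~\ref{lem:mollification} and \cite[Lemma 4.1]{dgz-thick-point}) yields a coupling with $\sup_{x\in\Pi\cap\mathbbm{S}^\e}|h_\e(x)-g_\e(x)|=o(\log\e^{-1})$ with probability $1-o_\e(1)$, hence LFPP lengths of planar paths change by only $\e^{o(1)}$. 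Combining, $D_h^{\e,\xi}(\partial_L\mathbbm{S}^\e,\partial_R\mathbbm{S}^\e;\mathbbm{S}^\e)\le\e^{o(1)}\cdot(\text{$2$D LFPP crossing distance of }[0,1]^2\cap\e\mathbb Z^2)$ with probability $1-o_\e(1)$, and \eqref{eq:def-lambda} together with Remark~\ref{rmk:mollification} gives $\lambda(d,\xi)\ge\lambda(2,\xi)\ge\underline{\rho}(\xi)$.

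\emph{The upper bound $\lambda(d,\xi)\le\xi\sqrt{2d-2}$.} We may assume $\xi>0$ (for $\xi=0$ the crossing distance is identically $1$). The plan is a first-moment estimate. For each integer $j$ with $0<j<1/\e$, let $H_j^\e=\{x\in\mathbbm{S}^\e:x_1=j\e\}$, a $(d-1)$-dimensional slice with $\asymp\e^{-(d-1)}$ vertices, and put $m_j=\min_{x\in H_j^\e}h_\e(x)$. A left–right crossing path of $\mathbbm{S}^\e$ visits each $H_j^\e$ at pairwise distinct vertices, so by \eqref{eq:def-length} and \eqref{eq:def-metric},
\[
D_h^{\e,\xi}(\partial_L\mathbbm{S}^\e,\partial_R\mathbbm{S}^\e;\mathbbm{S}^\e)\ \ge\ \sum_{0<j<1/\e}\e\,e^{\xi m_j}\qquad\text{deterministically.}
\]
Fix $\delta>0$. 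Since $h_\e(x)$ is centered Gaussian with variance $\log\e^{-1}+O(1)$, a Gaussian tail bound gives $\mathbb P[h_\e(x)\le-(\sqrt{2d-2}+\delta)\log\e^{-1}]\le\e^{(d-1)+\delta\sqrt{2d-2}+\delta^2/2+o(1)}$, so a union bound over $H_j^\e$ yields $\mathbb P[m_j<-(\sqrt{2d-2}+\delta)\log\e^{-1}]\le\e^{\delta\sqrt{2d-2}+\delta^2/2+o(1)}$. Hence if $N$ denotes the number of ``bad'' indices $j$, then $\mathbb E[N]\le\e^{-1+\delta\sqrt{2d-2}+\delta^2/2+o(1)}=o(\e^{-1})$, so $N=o(\e^{-1})$ with probability $1-o_\e(1)$ by Markov's inequality; on that event the displayed sum is at least $(1-o_\e(1))\,\e^{-1}\cdot\e\cdot\e^{\xi(\sqrt{2d-2}+\delta)}=\e^{\xi\sqrt{2d-2}+\xi\delta+o(1)}$. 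Thus for any $\alpha>\xi\sqrt{2d-2}$, choosing $\delta$ small enough that $\xi\sqrt{2d-2}+\xi\delta<\alpha$, we get $\mathbb P[D_h^{\e,\xi}(\partial_L\mathbbm{S}^\e,\partial_R\mathbbm{S}^\e;\mathbbm{S}^\e)\le\e^\alpha]\to0$, so $\alpha$ is not in the set in \eqref{eq:def-lambda}; as this holds for all $\alpha>\xi\sqrt{2d-2}$, we conclude $\lambda(d,\xi)\le\xi\sqrt{2d-2}$.

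\emph{Main obstacle.} The upper bound uses only that $h_\e(x)$ is Gaussian with variance $\log\e^{-1}+O(1)$, so it is robust and essentially soft. The real work is in the lower bound: one must check that restricting the $d$-dimensional mollified field to a coordinate plane is, up to a coupling error that is $o(\log\e^{-1})$ uniformly over the $\asymp\e^{-2}$ relevant vertices, a genuine mollification of the $2$D GFF — this is the only place where the interplay between the dimensions $d$ and $2$ has to be controlled, and it should follow from the log-correlated-field comparison technology already invoked for Remark~\ref{rmk:mollification}. Finally, just as $\xi\sqrt{2}$ is only one of the competing terms in the two-dimensional bound $\overline{\rho}$, I would not expect $\xi\sqrt{2d-2}$ to be sharp for $\xi$ bounded away from $0$.
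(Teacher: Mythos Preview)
Your proof is correct and follows essentially the same approach as the paper: the lower bound restricts to a lower-dimensional coordinate slice and uses that the restriction of the LGF is again an LGF together with a mollification comparison (the paper packages this as Lemma~\ref{lem:dimen-restriction} and iterates $d\to d-1$, while you go directly $d\to 2$), and the upper bound is the same Gaussian-tail/union-bound count of low-field vertices (you organize it by hyperplane slices $H_j^\e$, the paper counts over the whole cube, but the arithmetic is identical).
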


\begin{figure}[ht!]
\begin{center}
\includegraphics[scale=0.7]{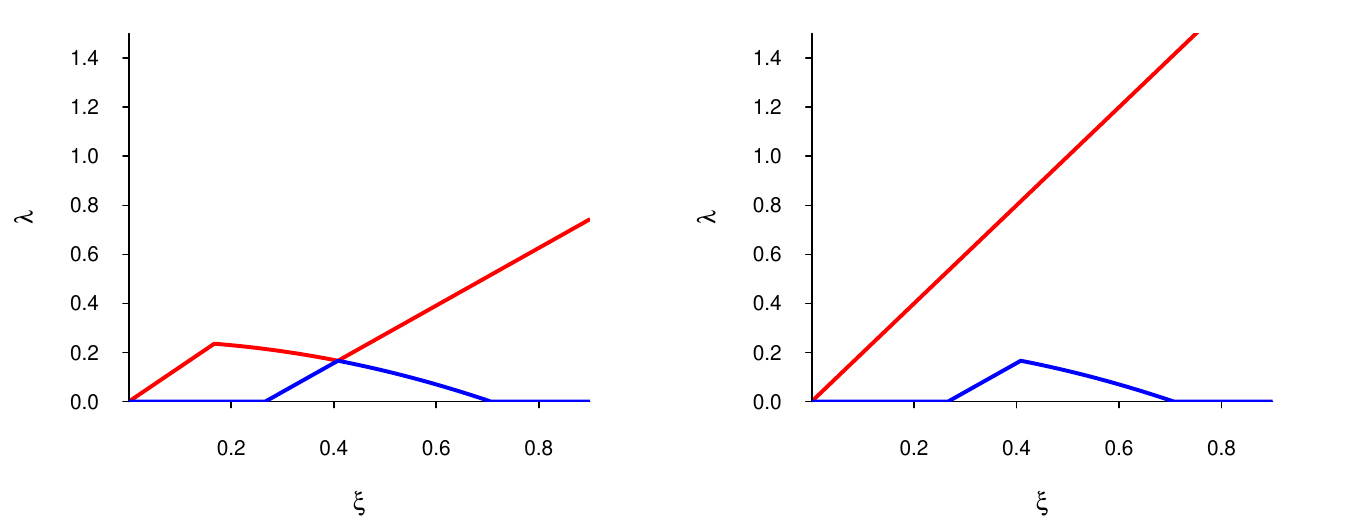}
\caption{\label{fig:bounds} \textbf{Left}: Graph of the upper and lower bounds on $\lambda(2,\xi)$ from Theorem~\ref{thm:bound-2d}. By the connection between $\sqrt{8/3}$-LQG and the Brownian map, we have $\lambda(2, 1/\sqrt{6}) = 1/6$. \textbf{Right}: Graph of the upper and lower bounds on $\lambda(3,\xi)$ from Theorem~\ref{thm:bound}. We do not know the explicit value of $\lambda(3, \xi)$ for any $\xi>0$.
}
\end{center}
\vspace{-3ex}
\end{figure}

The lower bound in Theorem~\ref{thm:bound} follows from the following lemma, which is based on the observation in \cite[Theorem 7.1]{fgf-survey} that restricting the $(d+1)$-dimensional LGF to $\mathbb{R}^d \times \{0\}$ yields a $d$-dimensional LGF. From this, we deduce that the $(d+1)$-dimensional LFPP distance should be upper-bounded by the $d$-dimensional LFPP distance and thus $\lambda(d,\xi)$ should be increasing with respect to $d$. Some caution is needed to handle the different ways of mollification in $d$- and $(d+1)$-dimensions; see Section~\ref{sec:restriction} for proof details.

\begin{lemma}\label{lem:dimen-restriction}
For all $\widetilde d > d \geq 2$ and $\xi \geq 0$, we have $\lambda(\widetilde d, \xi) \geq \lambda(d, \xi)$.
\end{lemma}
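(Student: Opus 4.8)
The plan is to compare the $(d+1)$-dimensional LFPP metric with the $d$-dimensional one by embedding $\mathbb{R}^d$ as a slice $\mathbb{R}^d \times \{0\}$ inside $\mathbb{R}^{d+1}$, so it suffices to treat the case $\widetilde d = d+1$ (the general case follows by iterating). Let $\widetilde h$ be the whole-space LGF on $\mathbb{R}^{d+1}$. By the restriction property from \cite[Theorem 7.1]{fgf-survey}, the restriction of $\widetilde h$ to $\mathbb{R}^d \times \{0\}$ is (a multiple of, and up to the additive normalization) a $d$-dimensional LGF $h$. The first step is to make this comparison quantitative at the level of mollified fields: I would show that for $\e > 0$, the box-average $\widetilde h_\e(x,0)$ of $\widetilde h$ over $(x,0) + [-\e,\e]^{d+1}$ and the box-average $h_\e(x)$ of $h$ over $x + [-\e,\e]^d$ have the same law as processes in $x \in \mathbbm{S}^\e$, or at least differ by at most $o(\log \e^{-1})$ uniformly on $\mathbbm{S}^\e$ with probability $1 - o_\e(1)$. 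This is exactly the kind of statement handled by Lemma~\ref{lem:mollification} and the mollification-comparison results cited in Remark~\ref{rmk:mollification}: averaging $\widetilde h$ over a $(d+1)$-dimensional box versus averaging its slice over a $d$-dimensional box are two mollifications of the same $d$-dimensional log-correlated field (after conditioning on / projecting onto the slice), and these are known to be comparable up to $o(\log \e^{-1})$.

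With the field comparison in hand, the second step is the metric comparison. Any discrete path $P$ in $\mathbbm{S}^\e \subseteq \e \mathbb{Z}^d$ can be viewed as a discrete path in the slice $\mathbbm{S}^\e \times \{0\} \subseteq \e\mathbb{Z}^{d+1}$, hence is an admissible competitor for $D_{\widetilde h}^{\e,\xi}(\partial_L(\mathbbm{S}\times\{0\})^\e, \partial_R(\mathbbm{S}\times\{0\})^\e; (\mathbbm{S}\times\{0\})^\e)$, and its $(d+1)$-dimensional length equals $\sum_i \e e^{\xi \widetilde h_\e(P(i),0)}$. Using the field comparison, this length is at most $\e^{o(1)} \sum_i \e e^{\xi h_\e(P(i))} = \e^{o(1)} L_h^{\e,\xi}(P)$ with high probability, uniformly over all such $P$ simultaneously (the $o(1)$ is deterministic on the high-probability event). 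Taking the infimum over $P$ gives
\begin{equation*}
D_{\widetilde h}^{\e,\xi}\bigl(\partial_L \mathbbm{S}^\e \times \{0\}, \partial_R \mathbbm{S}^\e \times \{0\}; \mathbbm{S}^\e \times \{0\}\bigr) \leq \e^{o(1)} D_h^{\e,\xi}(\partial_L \mathbbm{S}^\e, \partial_R \mathbbm{S}^\e; \mathbbm{S}^\e)
\end{equation*}
with probability $1 - o_\e(1)$. Since the left-hand side only allows paths inside the slice, it is an upper bound for the genuine $(d+1)$-dimensional set-to-set distance $D_{\widetilde h}^{\e,\xi}(\partial_L \widetilde{\mathbbm{S}}^\e, \partial_R \widetilde{\mathbbm{S}}^\e; \widetilde{\mathbbm{S}}^\e)$ where $\widetilde{\mathbbm{S}} = [0,1]^{d+1}$ — here I need to check that $\partial_L$ and $\partial_R$ of the $(d+1)$-cube, intersected with the slice, are exactly $\partial_L\mathbbm{S}^\e \times\{0\}$ and $\partial_R\mathbbm{S}^\e \times\{0\}$, which holds by the definitions since $\partial_L$ fixes the first coordinate to $0$ and leaves the rest free.

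The final step is to convert the metric inequality into the exponent inequality. If $\alpha < \lambda(d,\xi)$, then by definition $\mathbb{P}[D_h^{\e,\xi}(\partial_L\mathbbm{S}^\e,\partial_R\mathbbm{S}^\e;\mathbbm{S}^\e) \leq \e^\alpha] \to 1$; combining with the displayed inequality and the fact that $\e^{o(1)} \e^\alpha \leq \e^{\alpha'}$ for any $\alpha' < \alpha$ once $\e$ is small, we get $\mathbb{P}[D_{\widetilde h}^{\e,\xi}(\partial_L\widetilde{\mathbbm{S}}^\e,\partial_R\widetilde{\mathbbm{S}}^\e;\widetilde{\mathbbm{S}}^\e) \leq \e^{\alpha'}] \to 1$, so $\alpha' \leq \lambda(d+1,\xi)$; letting $\alpha' \uparrow \lambda(d,\xi)$ gives $\lambda(d+1,\xi) \geq \lambda(d,\xi)$, and iterating over dimensions gives the claim for all $\widetilde d > d$. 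I expect the main obstacle to be the first step — making the comparison between the $(d+1)$-dimensional box-average mollification and the $d$-dimensional box-average mollification of the restricted field rigorous, in particular correctly accounting for the multiplicative constant relating $\widetilde h|_{\mathbb{R}^d\times\{0\}}$ to a standard $d$-dimensional LGF and for the different spherical-average normalizations in the two dimensions, and verifying that the resulting discrepancy is genuinely $o(\log\e^{-1})$ uniformly on $\mathbbm{S}^\e$ so that it only contributes an $\e^{o(1)}$ factor to lengths. The rest is bookkeeping with the definition of $\lambda$.
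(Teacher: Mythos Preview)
Your proposal is correct and matches the paper's approach essentially line for line: reduce to $\widetilde d = d+1$, invoke the restriction property \cite[Theorem 7.1]{fgf-survey}, compare the $(d+1)$-dimensional box average with the $d$-dimensional box average of the restricted field via Lemma~\ref{lem:mollification}, and use a $d$-dimensional geodesic as a competitor path in dimension $d+1$. Your worry about a multiplicative constant is unnecessary --- the restricted field is \emph{exactly} a $d$-dimensional LGF up to an $\epsilon$-independent additive random constant $X$ (coming from the spherical-average normalizations), which the paper simply absorbs into the $\epsilon^{o(1)}$ factor as $e^{\xi X}$.
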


Now we prove Theorem~\ref{thm:bound} assuming Lemma~\ref{lem:dimen-restriction}.

\begin{proof}[Proof of Theorem~\ref{thm:bound} given Lemma~\ref{lem:dimen-restriction}]
    The lower bound in Theorem~\ref{thm:bound} follows from Theorem~\ref{thm:bound-2d} and Lemma~\ref{lem:dimen-restriction}. Next, we prove the upper bound in Theorem~\ref{thm:bound}. Note that $\mathbb{E} h_\e(x)^2 = - \log \e +O(1)$. Therefore, for any fixed $\delta>0$, with probability $1 - o_\e(1)$, there are at most $\e^{-1}$ number of vertices $v$ in $\mathbbm{S}^\e$ satisfying $h_\e(v) \leq (\sqrt{2d-2} +\delta) \log \e$. On this event, any path $P$ connecting $\partial_L \mathbbm{S}^\e$ and $\partial_R \mathbbm{S}^\e$ on $\mathbbm{S}^\e$ must contain at least $\e^{-1}$ number of vertices $v$ satisfying $h_\e(v) > (\sqrt{2d-2} + \delta) \log \e$. Therefore, $L^{\e,\xi}_h(P) \geq \e^{\xi (\sqrt{2d-2} + \delta)}$. This implies that $\lambda(d,\xi) \leq \xi (\sqrt{2d-2} +\delta)$. Taking $\delta \to 0$ yields the desired upper bound.\qedhere
    
\end{proof}

The following theorem gives a better lower bound for $\lambda$ when the dimension is sufficiently large, which follows as a consequence of~\cite{dgz-thick-point}. Note that when $d = 2$, it was shown in~\cite{lfpp-pos} that $\lambda(2, \xi) < 1$ for all $\xi \geq 0$. Theorem~\ref{thm:bound-high-dimen} shows that this inequality does not hold in sufficiently high dimensions. 

\begin{theorem}\label{thm:bound-high-dimen}
For any fixed $A>0$, there exists a constant $C>0$ depending on $A$ such that for all $d \geq C$ and $\xi \geq 0$, we have $\lambda(d, \xi) \geq A \xi - A^{-1}$.
\end{theorem}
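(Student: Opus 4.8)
The plan is to deduce Theorem~\ref{thm:bound-high-dimen} from the analysis of thick points of the LGF carried out in~\cite{dgz-thick-point}. By the definition~\eqref{eq:def-lambda} of $\lambda$, it suffices to exhibit, for some exponent $\alpha = \alpha(d,\xi) \ge A\xi - A^{-1}$, a path from $\partial_L\mathbbm{S}^\e$ to $\partial_R\mathbbm{S}^\e$ inside $\mathbbm{S}^\e$ whose $L_h^{\e,\xi}$-length is at most $\e^{\alpha}$ with probability $1 - o_\e(1)$. Since $\lambda(d,\xi) \ge 0$ always, the bound is only non-trivial for $\xi > A^{-2}$, so the real point is a lower bound on $\lambda$ whose slope in $\xi$ can be made arbitrarily large by taking $d$ large.

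The mechanism is as follows. The field $h_\e(v)$ has variance $\log\e^{-1} + O(1)$, so for any $\gamma < \sqrt{2d}$ the number of $v \in \mathbbm{S}^\e$ with $h_\e(v) \le -\gamma\log\e^{-1}$ is of order $\e^{\gamma^2/2 - d} \to \infty$, and a vertex this ``thick'' contributes only $\e^{1+\gamma\xi}$ to a path length. The difficulty -- and the substance of~\cite{dgz-thick-point} -- is that these very negative vertices are not spread out but organized along the branching structure of the LGF: one must chain together mesoscopic ``blobs'' in which the field is very negative (inside which almost every fine-scale vertex is still very negative, so the blob can be traversed) by ``corridors'' that lie inside coarser such blobs and are themselves only slightly less negative, iterating over a range of scales to obtain a crossing path that is uniformly thick while using only $\e^{-1-o_\e(1)}$ vertices. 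I will use the following consequence of~\cite{dgz-thick-point}: for every $\delta > 0$ there is $d_0(\delta) < \infty$ such that for all $d \ge d_0(\delta)$ and all $\xi \ge 0$,
\begin{equation}\label{eq-dgz-input}
\lambda(d,\xi) \;\ge\; (1-\delta)\sqrt{2d}\,\xi - \delta .
\end{equation}
If the relevant estimate of~\cite{dgz-thick-point} is instead phrased as an upper bound on $D_h^{\e,\xi}(\partial_L\mathbbm{S}^\e, \partial_R\mathbbm{S}^\e; \mathbbm{S}^\e)$ produced by an explicit crossing path $P$ with $h_\e(v) \le -(1-\delta)\sqrt{2d}\,\log\e^{-1}$ for all $v \in P$ and $|P| \le \e^{-1-o_\e(1)}$, then~\eqref{eq-dgz-input} is extracted exactly as in the proof of Theorem~\ref{thm:bound}, via $L_h^{\e,\xi}(P) \le |P|\cdot\e\cdot\e^{(1-\delta)\sqrt{2d}\,\xi} = \e^{(1-\delta)\sqrt{2d}\,\xi - o_\e(1)}$; only the leading order of the slope and the fact that the additive error is $o_\e(1)$ (or at worst $\to 0$ as $d \to \infty$) will matter below.

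Granting~\eqref{eq-dgz-input}, the theorem follows by a routine choice of parameters. It suffices to treat $A \ge 2$: for $A < 2$ the case $A = 2$ gives $\lambda(d,\xi) \ge 2\xi - \tfrac12 \ge A\xi - A^{-1}$, since $(2-A)\xi \ge 0 \ge \tfrac12 - A^{-1}$. So assume $A \ge 2$, put $\delta := \tfrac12$, and set $C := \max\{d_0(\tfrac12),\, 2A^2\}$. For $d \ge C$ we have $\sqrt{2d} \ge 2A$, hence $\tfrac12\sqrt{2d} \ge A$, and $-\tfrac12 \ge -A^{-1}$; thus~\eqref{eq-dgz-input} yields, for every $\xi \ge 0$,
\begin{equation*}
\lambda(d,\xi) \;\ge\; \tfrac12\sqrt{2d}\,\xi - \tfrac12 \;\ge\; A\xi - A^{-1},
\end{equation*}
as desired.

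The main obstacle is of course the input~\eqref{eq-dgz-input} -- equivalently, the construction of a uniformly thick crossing path of near-minimal length -- which is the heart of~\cite{dgz-thick-point} and which this argument does not reprove. The reason it is delicate is that a \emph{near-minimal} valley of the LGF is spatially localized and does not span a macroscopic portion of $\mathbbm{S}$, so no single valley crosses the box; one must instead work with valleys that are deep but strictly sub-optimal and that also drift across $\mathbbm{S}$, and control simultaneously the thickness attained along the path and the number of vertices it uses, which is carried out by a multi-scale argument along the tree of dyadic sub-boxes. Once that is available the deduction above is immediate; moreover, combined with the upper bound $\lambda(d,\xi) \le \xi\sqrt{2d-2}$ of Theorem~\ref{thm:bound}, the input~\eqref{eq-dgz-input} with its stated leading constant would in fact pin down the asymptotics $\lambda(d,\xi) = (1+o_d(1))\,\xi\sqrt{2d}$ as $d \to \infty$ for each fixed $\xi$.
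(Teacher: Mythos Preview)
Your approach is essentially the paper's: quote~\cite{dgz-thick-point} for a crossing path $P$ of $\mathbbm{S}^\e$ on which $h_\e$ is uniformly $\leq A\log\e$ and whose combinatorial length is $\e^{-1+o(1)}$, then bound $L_h^{\e,\xi}(P)$ vertex by vertex. The paper states the input directly in terms of a given $A$ (path length $\le \e^{-1-A^{-1}}$ and $h_\e(x)\le A\log\e$ on $P$, for $d$ large depending on $A$), which feeds straight into the conclusion without any parameter juggling; your packaging~\eqref{eq-dgz-input} is a slightly stronger uniform statement, but your actual use of it only needs the weaker, $A$-dependent version.

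There is, however, an arithmetic slip in your parameter choice. With $A\ge 2$ you assert ``$-\tfrac12 \ge -A^{-1}$'', but this inequality goes the other way for $A>2$ (indeed $A^{-1}<\tfrac12$). Concretely, at $\xi=0$ your chain gives $-\tfrac12 \ge -A^{-1}$, which is false, and for small $\xi>A^{-2}$ and $d$ just above $2A^2$ the inequality $(\tfrac12\sqrt{2d}-A)\xi \ge \tfrac12 - A^{-1}$ still fails. The fix is easy: take $\delta = A^{-1}$ instead of $\delta = \tfrac12$, and set $C=\max\{d_0(A^{-1}),\,2A^2\}$; then for $d\ge C$ one has $(1-A^{-1})\sqrt{2d}\ge A$ (since $\sqrt{2d}\ge 2A \ge \tfrac{A^2}{A-1}$ for $A\ge 2$), so $\lambda(d,\xi)\ge (1-A^{-1})\sqrt{2d}\,\xi - A^{-1}\ge A\xi - A^{-1}$ for all $\xi\ge 0$. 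Alternatively, you can follow the paper and cite the input from~\cite{dgz-thick-point} already in the form ``thickness $\ge A$, length $\le \e^{-1-A^{-1}}$'', which yields $\lambda(d,\xi)\ge A\xi-A^{-1}$ in one line.
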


\begin{proof}

By~\cite[Proposition 3.19]{dgz-thick-point}\footnote{The refined paths considered in Proposition 3.19 of~\cite{dgz-thick-point} at scale $n$ are defined on $\frac{1}{\lfloor \sqrt{d} \rfloor} 8^{-n} \mathbb{Z}^d$ and have length $O(1) \times 11^n$. The number 11 arises because at each step of refining, a vertex is expanded into a subpath of length 11. In fact, the same argument there can be applied to define refined paths on $\frac{1}{\lfloor \sqrt{d} \rfloor} 2^{-Mn} \mathbb{Z}^d$ that connect the left and right boundaries of $\mathbb{S}$ and each path has length at most $O(1) \times (2^M+3)^n$. By choosing $M$ large enough and taking $\epsilon = \frac{1}{\lfloor \sqrt{d} \rfloor} 2^{-Mn}$, we get a set of paths $\mathcal{P}$ on $\mathbb{S}^\epsilon$ with length at most $\epsilon^{-1-A^{-1}}$. For any fixed $M$ and sufficiently large $d$ (which may depend on $M$), using the same argument as in Proposition 3.19 of~\cite{dgz-thick-point}, we derive that with high probability, there exists a path $P$ in $\mathcal{P}$ such that $\widetilde h_{Mn}(x) \geq (A+1)Mn$ for all vertices on $P$, where $\widetilde h_{Mn}$ is the mollification considered in~\cite[Proposition 3.19]{dgz-thick-point} defined via white noise decomposition. It is easy to see that $\widetilde h_{Mn}$ differs from $h_\epsilon$ by at most $o(\log \epsilon^{-1})$ on $\mathbb{S}^\epsilon$ with probability tending to 1 as $\epsilon \to 0$. Therefore, on this event, $P$ satisfies the desired conditions. The result extends to general $\epsilon>0$ by discretizing the paths for dyadic $\epsilon$ and noting that the local fluctuation of $h_\epsilon(x)$ is at most $o(\log \epsilon^{-1})$ with probability tending to 1 as $\epsilon \to 0$.}, there exists a constant $C>0$ such that the following holds. For all $d \geq C$, with probability $1-o_\e(1)$, there exists a path $P$ in $\mathbbm{S}^\e$ connecting $\partial_L \mathbbm{S}^\e$ and $\partial_R \mathbbm{S}^\e$ such that
\begin{enumerate}[(a)]
\item The length of $P$ is at most $\e^{- 1 - A^{-1}}$;
\item For all vertices $x \in P$, we have $h_\e(x) \leq A \log \e $.
\end{enumerate}
Therefore, $D_h^{\e, \xi} (\partial_L \mathbbm{S}^\e, \partial_R \mathbbm{S}^\e; \mathbbm{S}^\e) \leq L_h^{\e, \xi} (P) \leq \e \times \e^{- 1 - A^{-1}} \times \exp( \xi A \log \e ) = \e^{\xi A - A^{-1}}$. This implies that $\lambda(d, \xi) \geq A \xi - A^{-1}$ for all $d \geq C$.\qedhere
\end{proof}

Next, we give upper and lower bounds on the derivative of $\lambda$ with respect to $\xi$, denoted by $\lambda'$. The proof will be given in Section~\ref{sec:differential}, using arguments similar to those for the two-dimensional case in~\cite{dg-fractal-dim, gp-lfpp}.

\begin{lemma}\label{lem:differential-bound}
    Fix $d \geq 3$. The function $\xi \to \lambda(\xi)$ is $\sqrt{2d}$-Lipschitz on $[0,\infty)$, and thus $\lambda'$ exists for Lebesgue-a.e.~$\xi$. Moreover, for Lebesgue-a.e.~$\xi \geq 0$, we have
    $$
    \max \big{\{} -\xi, \frac{1}{\xi} (\lambda - 1) \big{\}} \leq \lambda' \leq \sqrt{2(d-1)+2\lambda +\xi^2} - \xi.
    $$
\end{lemma}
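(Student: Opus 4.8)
The plan is to follow the two-dimensional template from \cite{dg-fractal-dim, gp-lfpp}, adapting each step to the $d$-dimensional LGF. The basic mechanism is to compare the LFPP metrics $D_h^{\e,\xi}$ and $D_h^{\e,\xi'}$ for two nearby values $\xi < \xi'$ of the parameter. First I would establish the Lipschitz bound. The key input is a Gaussian concentration estimate: on $\mathbbm{S}^\e$, the field $h_\e$ has variance $-\log\e + O(1)$, so by a union bound over the $\e^{-d}$ vertices, with probability $1-o_\e(1)$ one has $\max_{v \in \mathbbm{S}^\e} h_\e(v) \leq \sqrt{2d}\log\e^{-1} (1+o(1))$ and $\min_{v \in \mathbbm{S}^\e} h_\e(v) \geq -\sqrt{2d}\log\e^{-1}(1+o(1))$. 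Hence for any path $P$ in $\mathbbm{S}^\e$, the ratio of its $\xi'$-length to its $\xi$-length lies in $[\e^{\sqrt{2d}(\xi'-\xi)}, \e^{-\sqrt{2d}(\xi'-\xi)}]$ on this event. Taking infima over paths, this shows $D_h^{\e,\xi'}(\partial_L\mathbbm{S}^\e, \partial_R\mathbbm{S}^\e;\mathbbm{S}^\e) \leq \e^{-\sqrt{2d}(\xi'-\xi)} D_h^{\e,\xi}(\cdots)$, from which one reads off $\lambda(\xi') \geq \lambda(\xi) - \sqrt{2d}(\xi'-\xi)$; by symmetry $|\lambda(\xi')-\lambda(\xi)| \leq \sqrt{2d}|\xi'-\xi|$, so $\lambda$ is $\sqrt{2d}$-Lipschitz and differentiable a.e.\ by Rademacher.

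For the lower bounds on $\lambda'$, I would argue at a point $\xi$ where $\lambda'$ exists. The bound $\lambda' \geq -\xi$ comes from a sharper version of the comparison above: the relevant geodesic for parameter $\xi$ only passes through vertices whose $h_\e$-value is (essentially) at most the value $\sqrt{2d-2}+o(1)$ appearing in the proof of Theorem~\ref{thm:bound}, but in fact one expects the geodesic to have a much more constrained field profile. More carefully, the standard argument (see \cite[Section 2]{gp-lfpp}) shows that the near-geodesic for $\xi$ has length $\e^{-d+o(1)}$ vertices each with $h_\e$ roughly $0$-ish or at least not too negative; increasing $\xi$ to $\xi'$ multiplies the length of this same path by at most $\e^{-\xi(\xi'-\xi)+o(1)}$ using the bound $\max h_\e \le \xi \log \e^{-1}(1+o(1))$ along the geodesic rather than $\sqrt{2d}$, giving $\lambda(\xi') \ge \lambda(\xi) - \xi(\xi'-\xi) + o(\xi'-\xi)$, hence $\lambda' \geq -\xi$. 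The bound $\lambda' \geq \xi^{-1}(\lambda-1)$ follows from Hölder's inequality applied to the length functional: writing $e^{\xi' h_\e} = e^{\xi h_\e} \cdot e^{(\xi'-\xi)h_\e}$ and using that a near-geodesic has at most $\e^{-1-\lambda/\xi' + o(1)}$ vertices (which is how $\lambda$ controls path length), one bounds the $\xi'$-length by a product and optimizes; this reproduces the 2D computation verbatim with the same algebra.

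For the upper bound $\lambda' \leq \sqrt{2(d-1)+2\lambda+\xi^2}-\xi$, I would adapt the argument of \cite[Proposition 3.1]{gp-lfpp} (or the analogous step in \cite{dg-fractal-dim}). The idea is to take a near-geodesic $P$ for the parameter $\xi$, which by definition of $\lambda$ has $L_h^{\e,\xi}(P) \leq \e^{\lambda - o(1)}$, and to modify it into a competitor path for $\xi'$. One partitions $P$ according to the value of $h_\e$ along it; the number of vertices where $h_\e(v) \approx a\log\e^{-1}$ is constrained both by the total $\xi$-length (which forces $e^{\xi a \log \e^{-1}}$ times the count to be at most $\e^{\lambda - o(1)}$, i.e.\ the count is at most $\e^{-(\lambda - \xi a)(1+o(1))} \cdot \e^{-1}$ roughly) and by the entropy cost $\e^{-(d - a^2/2) + o(1)}$ of having that many $a$-thick points in a $d$-dimensional box (this is where $d$ enters through the dimension of $\mathbbm{S}^\e$ and the Gaussian tail $\P[h_\e(v) \geq a\log\e^{-1}] = \e^{a^2/2 + o(1)}$). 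Combining these two constraints and then bounding $L_h^{\e,\xi'}(P) = \sum_v \e e^{\xi' h_\e(v)}$ by summing $e^{\xi' a \log\e^{-1}}$ against the smaller of the two counts over the dyadic range of $a$, one arrives at an exponent which, after optimizing over $a$, gives $\lambda(\xi') \le \xi' \cdot$ (the maximizing $a$) and the stated derivative bound.

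The main obstacle I anticipate is not conceptual but bookkeeping: in the 2D references the Gaussian field is the GFF with its specific covariance structure and the arguments are written using two-dimensional scaling relations (e.g.\ the number of boxes at a given scale, the KPZ-type dimension counts). Here the LGF in $d$ dimensions has the same logarithmic covariance but lives in a $d$-dimensional box, so every place where the exponent $2$ or a factor $1/2$ or $1/4$ appeared as a ``dimension of the box'' must be tracked and replaced by $d$, while the factors coming purely from the Gaussian tail ($a^2/2$) stay the same. The cleanest way to avoid error is to quote \cite[Lemma 4.1]{dgz-thick-point} (as the authors already do in Remark~\ref{rmk:mollification}) for the needed concentration/mollification-comparison estimates for the $d$-dimensional LGF, and then to redo the optimization symbolically with $d$ kept as a free parameter; one should check at the end that setting $d = 2$ recovers the bound $\lambda' \le \sqrt{2+2\lambda+\xi^2}-\xi$ of \cite{gp-lfpp}, which it does since $2(d-1)|_{d=2} = 2$. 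I would also need to verify that the ``near-geodesic has at most $\e^{-1-\lambda/\xi+o(1)}$ vertices'' statement—used in the lower bound $\lambda' \ge \xi^{-1}(\lambda-1)$—holds in this setting; this should follow from the trivial bound that each vertex contributes length at least $\e \cdot e^{\xi \min h_\e} \geq \e^{1 + \sqrt{2d}\xi + o(1)}$... actually one wants a lower bound on the per-vertex length that is not too small, which holds on the high-probability event that $\min_{v} h_\e(v) \geq -\sqrt{2d}\log\e^{-1}(1+o(1))$, so that a path of $\xi$-length $\le \e^{\lambda-o(1)}$ has at most $\e^{\lambda - 1 - \sqrt{2d}\xi - o(1)}$... hmm, this is weaker than what's wanted, so the correct argument must instead use that the geodesic can be taken to avoid very negative thick points at the cost of negligibly increasing its length, a standard but slightly delicate point handled in \cite{dg-fractal-dim}; I would import that lemma rather than reprove it.
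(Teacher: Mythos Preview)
Your Lipschitz argument and your treatment of the upper bound (iii) are essentially on the right track and match the paper's approach: split the near-geodesic at a field threshold $\alpha\log\e$, control the low-field vertices by the thick-point count $\e^{-(d-\alpha^2/2)}$, control the rest by the $\xi$-length, and optimize in $\alpha$. One correction: the comparison should go from $\xi$ down to $\widetilde\xi \leq \xi$, not up to $\xi' > \xi$. An upper bound on $L_h^{\e,\widetilde\xi}(P)$ gives a \emph{lower} bound on $\lambda(\widetilde\xi)$, which for $\widetilde\xi < \xi$ yields $\frac{\lambda(\xi)-\lambda(\widetilde\xi)}{\xi-\widetilde\xi} \leq \sqrt{2(d-1)+2\lambda+\xi^2}-\xi$, i.e.\ the desired upper bound on $\lambda'$. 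As you wrote it (with $\xi'>\xi$), the inequality points the wrong way.

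However, your arguments for the two lower bounds have genuine gaps, and in both cases the paper uses a different and much simpler mechanism. For $\lambda' \geq -\xi$, you assert that along a $\xi$-geodesic one has $\max h_\e \leq \xi\log\e^{-1}(1+o(1))$; this is not established anywhere and is not how the 2D argument works. The paper instead uses the Gaussian interpolation trick: take an independent copy $h'$ and set $\widetilde h = \widetilde\xi^{-1}(\xi h + \sqrt{\widetilde\xi^2-\xi^2}\,h')$, so that $\widetilde h \overset{d}{=} h$ and $e^{\widetilde\xi \widetilde h_\e} = e^{\xi h_\e}\cdot e^{\sqrt{\widetilde\xi^2-\xi^2}\,h'_\e}$. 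Conditioning on $h$ and taking the expectation over $h'$ multiplies the length of any fixed path by $O(\e^{-(\widetilde\xi^2-\xi^2)/2})$, which by Markov's inequality gives $\lambda(\widetilde\xi)\geq \lambda(\xi)+\tfrac12(\xi^2-\widetilde\xi^2)$ directly. No control of the field along the geodesic is needed.

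For $\lambda' \geq (\lambda-1)/\xi$, you try to bound the vertex count of a near-geodesic and then apply H\"older, eventually conceding that an extra ``avoid negative thick points'' lemma would need to be imported. The paper's argument is a single line and needs none of this: for $p = \xi/\widetilde\xi \in (0,1)$ the elementary inequality $\bigl(\sum_i a_i\bigr)^p \leq \sum_i a_i^p$ applied with $a_i = e^{\widetilde\xi h_\e(P(i))}$ gives $(\e^{-1}L_h^{\e,\widetilde\xi}(P))^{\xi/\widetilde\xi} \leq \e^{-1}L_h^{\e,\xi}(P)$ for every path $P$, hence $(\lambda(\widetilde\xi)-1)\xi/\widetilde\xi \geq \lambda(\xi)-1$. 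This immediately yields $\bigl(\frac{\lambda(\xi)-1}{\xi}\bigr)' \geq 0$.
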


For $d = 2$, we know that $\lambda(2, 1/\sqrt{6}) = 1/6$, so the above differential inequalities were used in~\cite{gp-lfpp} to derive sharp bounds on $\lambda$ particularly around $\xi = 1/\sqrt{6}$ as shown in Theorem~\ref{thm:bound-2d}. In contrast, for $d \geq 3$, the explicit value of $\lambda(d,\xi)$ is not known for any $\xi > 0$, so the above differential inequalities cannot yield sharper bounds than those in Theorem~\ref{thm:bound}. Nevertheless, if the value of $\lambda(d, \xi)$ were known for some $\xi$, the bound in Theorem~\ref{thm:bound} could be improved around that value.

\subsection{Monotonicity of the fractal dimension in the subcritical region}
\label{subsec:fractal-dimension}

After appropriate normalization, the metrics $D_h^{\e,\xi}$ defined in~\eqref{eq:def-metric} are expected to converge to a limiting metric as $\e \to 0$. The topology of the convergence and the behavior of the limiting metric will depend on $\xi$. In particular, it is expected that there exists a critical value $\xi_c \in (0, \infty)$ such that for $\xi \in (0, \xi_c)$ (the subcritical region), the sequence of metrics converges with respect to the uniform topology and the limiting metric induces the Euclidean topology. For $d = 2$, the convergence in the subcritical region was established in~\cite{DDDF-tightness, GM-uniqueness}, while for $d \geq 3$, only the tightness of the metrics is proved in~\cite{dgz-exponential-metric}, with the uniqueness part still open.\footnote{Note that different mollifications of the LGF are considered in~\cite{DDDF-tightness} and~\cite{dgz-exponential-metric}. Specifically, \cite{DDDF-tightness} considered the mollification of the two-dimensional GFF through convolution with the heat kernel, while \cite{dgz-exponential-metric} considered the mollification of the LGF through the white noise decomposition. However, it is expected that the convergence holds for any reasonable mollification, and moreover, the limiting metric is determined by the LGF independent of the choice of mollification; see~\cite{GM-uniqueness}.}

For the two-dimension Liouville quantum gravity (LQG) metric, there is another parameter $\gamma \in (0,2)$ which is one-to-one mapped to $\xi \in (0,\xi_c)$. More precisely, for $\gamma \in (0,2)$, let $\mathsf d_\gamma$ be the Hausdorff dimension of $\mathbb{C}$ equipped with the $\gamma$-LQG metric. Then, they satisfy the relations $\xi = \frac{\gamma}{\mathsf d_\gamma}$ and $\lambda(2, \xi) = 1 - \xi Q$, where $Q = \frac{2}{\gamma} + \frac{\gamma}{2}$; see~\cite{dg-fractal-dim, GP-KPZ-metric}. We refer to Section 2.3 of~\cite{dg-fractal-dim} for an explanation of this relation.\footnote{We can solve the equation $\lambda (2, \xi) = 1 - \xi Q$ for any $\xi \in (0,\infty)$. When $d=2$ and $\xi \in (\xi_c,\infty)$ (the supercritical region), we obtain $Q \in (0,2)$~\cite{DG-supercritical-tightness}. In this case, the sequence of metrics $D_h^{\e, \xi}$ converges with respect to the topology on lower semi-continuous functions, and the limiting metric has singular points~\cite{DG-supercritical-tightness, DG-supercritical-unique}. We refer to~\cite{bhatia2024area} for more discussions about the supercritical LQG.} The parameter $Q$ appears natural in the coordinate change formula for LQG, and for $d \geq 3$, we have $Q = \frac{d}{\gamma} + \frac{\gamma}{2}$; see~\cite[Definition 1.3]{contreras2024gaussian}. In light of these relations, we can define $\mathsf d_\gamma$ for $d \geq 3$, which is expected to be the Hausdorff dimension of the limiting metric.

\begin{defn}\label{defn:d-gamma}
    Fix $d \geq 3$. For each $\gamma \in (0,\sqrt{2d})$, define $\mathsf d_\gamma>0$ such that
    $$
    \lambda(d, \xi) = 1 - \xi Q \quad \mbox{for $\xi = \frac{\gamma}{\mathsf d_\gamma}$ and $Q = \frac{d}{\gamma} + \frac{\gamma}{2}$}.
    $$
\end{defn}

We will prove the following proposition in Section~\ref{sec:monotone}. The proof is based on Lemma~\ref{lem:differential-bound} and follows arguments similar to the two-dimensional case in~\cite[Proposition 1.7]{dg-fractal-dim}.

\begin{prop}\label{prop:dimension-increase}
    For each $\gamma \in (0,\sqrt{2d})$, there exists a unique $\mathsf d_\gamma \in (0,\infty)$ satisfying Definition~\ref{defn:d-gamma}. Moreover, as a function of $\gamma$, both $\xi$ and $\mathsf d_\gamma$ are continuous and strictly increasing on $(0,\sqrt{2d})$.
\end{prop}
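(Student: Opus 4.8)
The plan is to convert the implicit definition in Definition~\ref{defn:d-gamma} into a one-dimensional root-finding problem in $\xi$, analyze the resulting auxiliary function using the bounds already available, and then transfer continuity and monotonicity to $\mathsf d_\gamma$. Throughout, write $Q(\gamma) = \frac{d}{\gamma} + \frac{\gamma}{2}$; by AM–GM, $Q(\gamma)\geq\sqrt{2d}$ with equality only at $\gamma=\sqrt{2d}$, and $\gamma\mapsto Q(\gamma)$ is strictly decreasing on $(0,\sqrt{2d})$.

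\textbf{Step 1: existence and uniqueness.} Solving $\lambda(d,\xi)=1-\xi Q(\gamma)$ for $\xi$ and setting $\mathsf d_\gamma=\gamma/\xi$ is equivalent to Definition~\ref{defn:d-gamma}, so I study $f_\gamma(\xi):=\lambda(d,\xi)-1+\xi Q(\gamma)$. By Lemma~\ref{lem:differential-bound}, $f_\gamma$ is Lipschitz, and $f_\gamma(0)=\lambda(d,0)-1=-1$ since $\lambda(d,0)=0$ by Theorem~\ref{thm:bound}. The bound $\lambda'\geq-\xi$ from Lemma~\ref{lem:differential-bound} makes $\xi\mapsto\lambda(d,\xi)+\xi^2/2$ non-decreasing, so $f_\gamma(\xi)=\big(\lambda(d,\xi)+\tfrac{\xi^2}{2}\big)+\big(\xi Q(\gamma)-\tfrac{\xi^2}{2}\big)-1$ is strictly increasing on $[0,Q(\gamma)]$, being a non-decreasing function plus the strictly increasing $\xi\mapsto\xi Q(\gamma)-\xi^2/2$. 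On $[Q(\gamma),\infty)$ we get $f_\gamma(\xi)\geq 0-1+Q(\gamma)^2\geq 2d-1>0$ using $\lambda\geq 0$ (Theorem~\ref{thm:bound}). Hence $f_\gamma$ has a unique zero $\xi^*(\gamma)\in(0,Q(\gamma))$. Define $\mathsf d_\gamma:=\gamma/\xi^*(\gamma)\in(0,\infty)$; since $\lambda(d,\xi^*(\gamma))=1-\xi^*(\gamma)Q(\gamma)\in[0,1)$, we record the a priori bound $\mathsf d_\gamma=(d+\gamma^2/2)/(1-\lambda(d,\xi^*(\gamma)))>d$.

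\textbf{Step 2: continuity and strict monotonicity of $\xi^*$.} For continuity: $f_\gamma(\xi)$ is jointly continuous (indeed $f_\gamma(\xi)-f_{\gamma_0}(\xi)=\xi(Q(\gamma)-Q(\gamma_0))$), and for small $\delta>0$ strict monotonicity of $f_{\gamma_0}$ near its zero gives $f_{\gamma_0}(\xi^*(\gamma_0)\mp\delta)\lessgtr 0$; hence $f_\gamma(\xi^*(\gamma_0)\mp\delta)\lessgtr 0$ for $\gamma$ near $\gamma_0$, and since $f_\gamma$ has a unique zero, $\xi^*(\gamma)\in(\xi^*(\gamma_0)-\delta,\xi^*(\gamma_0)+\delta)$. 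For strict monotonicity, let $\gamma_1<\gamma_2$, so $Q(\gamma_1)>Q(\gamma_2)$ and
\begin{equation*}
f_{\gamma_1}(\xi^*(\gamma_2))=\lambda(d,\xi^*(\gamma_2))-1+\xi^*(\gamma_2)Q(\gamma_1)>\lambda(d,\xi^*(\gamma_2))-1+\xi^*(\gamma_2)Q(\gamma_2)=0.
\end{equation*}
As $\xi^*(\gamma_2)<Q(\gamma_2)<Q(\gamma_1)$, both $\xi^*(\gamma_1)$ and $\xi^*(\gamma_2)$ lie in $[0,Q(\gamma_1)]$ where $f_{\gamma_1}$ is strictly increasing, and $f_{\gamma_1}(\xi^*(\gamma_1))=0<f_{\gamma_1}(\xi^*(\gamma_2))$ forces $\xi^*(\gamma_1)<\xi^*(\gamma_2)$.

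\textbf{Step 3: properties of $\mathsf d_\gamma$.} Continuity of $\mathsf d_\gamma=\gamma/\xi^*(\gamma)$ is immediate from Step 2 and $\xi^*(\gamma)>0$. For strict monotonicity I will show $t(\gamma):=\xi^*(\gamma)/\gamma=1/\mathsf d_\gamma$ is strictly decreasing. Using $d+\gamma^2/2=\gamma Q(\gamma)$, the equation reads $H(t(\gamma),\gamma)=0$ with $H(t,\gamma):=\lambda(d,t\gamma)-1+t(d+\gamma^2/2)$, and $t(\gamma)=1/\mathsf d_\gamma<1/d<1$ by Step 1. For fixed $t\in(0,1)$, $\gamma\mapsto H(t,\gamma)$ is Lipschitz with a.e.\ derivative $t(\lambda'(d,t\gamma)+\gamma)\geq t\gamma(1-t)>0$ by Lemma~\ref{lem:differential-bound}, hence strictly increasing on $(0,\sqrt{2d})$; for fixed $\gamma$, $t\mapsto H(t,\gamma)$ has a.e.\ derivative $\gamma(\lambda'(d,t\gamma)+Q(\gamma))\geq\gamma(Q(\gamma)-t\gamma)>0$ (since $t\gamma<\gamma<\sqrt{2d}\leq Q(\gamma)$), hence strictly increasing on $(0,1)$. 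For $\gamma_1<\gamma_2$: monotonicity in $\gamma$ gives $H(t(\gamma_1),\gamma_2)>H(t(\gamma_1),\gamma_1)=0=H(t(\gamma_2),\gamma_2)$, and monotonicity in $t$ then gives $t(\gamma_1)>t(\gamma_2)$, i.e.\ $\mathsf d_{\gamma_1}<\mathsf d_{\gamma_2}$. This parallels the two-dimensional argument in \cite[Proposition~1.7]{dg-fractal-dim}.

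\textbf{Main obstacle.} The most delicate points are the bookkeeping of a.e.\ differentiability — one should argue via absolute continuity of the Lipschitz functions $\lambda$, $f_\gamma$, $H(t,\cdot)$, $H(\cdot,\gamma)$ rather than pointwise derivatives — and verifying that each root lands in a region where the relevant auxiliary function is \emph{genuinely} strictly monotone. The latter forces one to combine the two-sided bound $0\leq\lambda(d,\xi)\leq\xi\sqrt{2d-2}$ from Theorem~\ref{thm:bound}, the sharp slope bound $\lambda'\geq-\xi$ from Lemma~\ref{lem:differential-bound}, and the elementary estimate $Q(\gamma)\geq\sqrt{2d}$; in particular the monotonicity of $\mathsf d_\gamma$ is subtler than that of $\xi^*$ because $\gamma$ and $\xi^*(\gamma)$ increase together, so mere Lipschitzness of $\lambda$ does not suffice to control the ratio $\gamma/\xi^*(\gamma)$.
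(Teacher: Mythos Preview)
Your proof is correct. It differs from the paper's argument in organization rather than in essential difficulty. The paper introduces the auxiliary function $\widehat Q(\xi)=(1-\lambda(\xi))/\xi$, uses both lower bounds from Lemma~\ref{lem:differential-bound} (namely $\lambda'\geq(\lambda-1)/\xi$ to get non-increase of $\widehat Q$, and $\lambda'\geq-\xi$ to upgrade to strict decrease on $(0,\xi_c)$), and then proves monotonicity of $\mathsf d_\gamma$ by an explicit computation of $\frac{\mathrm d\,\mathsf d_\gamma}{\mathrm d\xi}$ via the formula $\mathsf d_\gamma=\frac{1}{\xi}\big(\widehat Q(\xi)-\sqrt{\widehat Q(\xi)^2-2d}\big)$. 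You instead set up the parametrized root-finding problems $f_\gamma(\xi)=0$ and $H(t,\gamma)=0$ and run an implicit-function style monotonicity argument; this uses only the single inequality $\lambda'\geq-\xi$ together with $\lambda\geq 0$, and avoids the explicit derivative computation entirely. What your route buys is a slightly cleaner dependence on the inputs (you never invoke Claim~\eqref{diff-2}) and no algebra with the quadratic formula; what the paper's route buys is the concrete object $\widehat Q$, which is of independent interest and makes the connection to the critical value $\xi_c$ transparent. Both arguments ultimately hinge on the same two ingredients: the slope bound $\lambda'\geq-\xi$ and the nonnegativity $\lambda\geq 0$ from Theorem~\ref{thm:bound}.
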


The following bound for $\mathsf d_\gamma$ can be derived from Theorem~\ref{thm:bound} and Definition~\ref{defn:d-gamma}; see Figure~\ref{fig:bounds-d}.

\begin{cor} \label{cor}
    For all $d \geq 3$ and $\gamma \in (0, \sqrt{2d})$, we have $$\max \left\{ d + \gamma^2/2, \frac{6}{\sqrt{15}+4}(d + \gamma^2/2 + (\sqrt{5/2} - \sqrt{1/6})\gamma) \right\} \leq \mathsf d_\gamma \leq d +  \gamma^2/2 + \gamma \sqrt{2d-2}.$$
\end{cor}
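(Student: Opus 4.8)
The plan is to turn Definition~\ref{defn:d-gamma} into an explicit formula relating $\mathsf d_\gamma$ and $\lambda(d,\xi)$, and then simply substitute the bounds of Theorem~\ref{thm:bound}. With $\xi = \gamma/\mathsf d_\gamma$ and $Q = d/\gamma + \gamma/2$ one has $\xi Q = (d+\gamma^2/2)/\mathsf d_\gamma$, so Definition~\ref{defn:d-gamma} says $\lambda(d,\xi) = 1 - (d+\gamma^2/2)/\mathsf d_\gamma$, i.e.
\[
\mathsf d_\gamma = d + \frac{\gamma^2}{2} + \mathsf d_\gamma\,\lambda(d,\xi).
\]
(Existence and uniqueness of $\mathsf d_\gamma\in(0,\infty)$, hence $\lambda(d,\xi)<1$, are given by Proposition~\ref{prop:dimension-increase}, so all manipulations are legitimate.) From here each of the three inequalities in Corollary~\ref{cor} is obtained by feeding in $\underline\rho(\xi)\le\lambda(d,\xi)\le\xi\sqrt{2d-2}$ from Theorem~\ref{thm:bound}.

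For the first lower bound, note $\lambda(d,\xi)\ge\underline\rho(\xi)\ge 0$, so the displayed identity gives $\mathsf d_\gamma \ge d+\gamma^2/2$ at once. For the upper bound, use $\lambda(d,\xi)\le \xi\sqrt{2d-2} = \gamma\sqrt{2d-2}/\mathsf d_\gamma$, whence $\mathsf d_\gamma\,\lambda(d,\xi)\le\gamma\sqrt{2d-2}$ and therefore $\mathsf d_\gamma \le d+\gamma^2/2 + \gamma\sqrt{2d-2}$.

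For the second lower bound I want to use the linear branch of $\underline\rho$, i.e.\ $\lambda(d,\xi)\ge(\sqrt{5/2}-\sqrt{1/6})\xi - (\sqrt{15}-2)/6$. Since that branch of the definition of $\underline\rho$ is stated only for $\xi\le 1/\sqrt6$, I first check that $\xi = \gamma/\mathsf d_\gamma$ always lies in $[0,1/\sqrt6]$ in our range: by the lower bound just proved, $\xi = \gamma/\mathsf d_\gamma \le \gamma/(d+\gamma^2/2)$, and $\gamma\mapsto \gamma/(d+\gamma^2/2)$ is increasing on $(0,\sqrt{2d})$ with supremum $1/\sqrt{2d}$; since $d\ge 3$ this is $\le 1/\sqrt6$. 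Granting this, multiply $\lambda(d,\xi)\ge(\sqrt{5/2}-\sqrt{1/6})\xi - (\sqrt{15}-2)/6$ by $\mathsf d_\gamma$ and use $\mathsf d_\gamma\xi = \gamma$ to get
\[
\mathsf d_\gamma\,\lambda(d,\xi)\ \ge\ \left(\sqrt{5/2}-\sqrt{1/6}\right)\gamma - \frac{\sqrt{15}-2}{6}\,\mathsf d_\gamma .
\]
Plugging this into $\mathsf d_\gamma = d+\gamma^2/2 + \mathsf d_\gamma\lambda(d,\xi)$ and collecting the $\mathsf d_\gamma$ terms (using $1+\tfrac{\sqrt{15}-2}{6}=\tfrac{\sqrt{15}+4}{6}$) yields
\[
\mathsf d_\gamma\cdot\frac{\sqrt{15}+4}{6}\ \ge\ d + \frac{\gamma^2}{2} + \left(\sqrt{5/2}-\sqrt{1/6}\right)\gamma ,
\]
which is exactly $\mathsf d_\gamma \ge \frac{6}{\sqrt{15}+4}\big(d+\gamma^2/2 + (\sqrt{5/2}-\sqrt{1/6})\gamma\big)$. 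Taking the maximum of the two lower bounds completes the proof.

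I do not expect a genuine obstacle here: the argument is bookkeeping around the identity $\mathsf d_\gamma = d+\gamma^2/2 + \mathsf d_\gamma\lambda$. The one place to be careful is the verification that $\xi = \gamma/\mathsf d_\gamma \le 1/\sqrt6$, so that the \emph{linear} branch of $\underline\rho$ (and not the quadratic one) is the relevant one; this is where the hypothesis $d\ge 3$ and the restriction $\gamma<\sqrt{2d}$ are actually used.
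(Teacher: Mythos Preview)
Your proof is correct and follows essentially the same approach as the paper's: both rewrite Definition~\ref{defn:d-gamma} as the identity $\mathsf d_\gamma(1-\lambda(d,\xi)) = d+\gamma^2/2$, substitute the bounds $0\le\lambda(d,\xi)\le\xi\sqrt{2d-2}$ from Theorem~\ref{thm:bound} to obtain the first lower bound and the upper bound, then use the resulting inequality $\mathsf d_\gamma\ge d+\gamma^2/2$ together with the AM--GM-type observation that $\gamma/(d+\gamma^2/2)\le 1/\sqrt{2d}\le 1/\sqrt6$ to justify applying the linear branch of $\underline\rho$ for the second lower bound. Your write-up is slightly more explicit in the algebra (e.g.\ the step $1+\tfrac{\sqrt{15}-2}{6}=\tfrac{\sqrt{15}+4}{6}$), but there is no substantive difference in method.
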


\begin{proof}
    We first use the bound $0 \leq \lambda(\xi) \leq \xi \sqrt{2d-2}$ from Theorem~\ref{thm:bound} to obtain upper and lower bounds for $\mathsf d_\gamma$. Combining this with Definition~\ref{defn:d-gamma} yields that $0 \leq 1 - \frac{\gamma}{\mathsf d_\gamma}(\frac{d}{\gamma} + \frac{\gamma}{2}) \leq \frac{\gamma}{\mathsf d_\gamma} \sqrt{2d-2}$. Thus, it follows that $d + \gamma^2/2 \leq \mathsf d_\gamma \leq d +  \gamma^2/2 + \gamma \sqrt{2d-2}$. For $d \geq 3$ and $\gamma \in (0, \sqrt{2d})$, we have $\xi = \frac{\gamma}{\mathsf d_\gamma} \leq \frac{\gamma}{d + \gamma^2/2} \leq \frac{1}{\sqrt{2d}} \leq \frac{1}{\sqrt{6}}$. Therefore, Theorem~\ref{thm:bound} further implies that $(\sqrt{\frac{5}{2}} - \frac{1}{\sqrt{6}}) \frac{\gamma}{\mathsf d_\gamma} - \frac{\sqrt{15}-2}{6} \leq 1 - \frac{\gamma}{\mathsf d_\gamma}(\frac{d}{\gamma} + \frac{\gamma}{2})$.
    After simplification, we obtain the corollary.
\end{proof}

\begin{figure}[ht!]
\begin{center}
\includegraphics[scale=0.7]{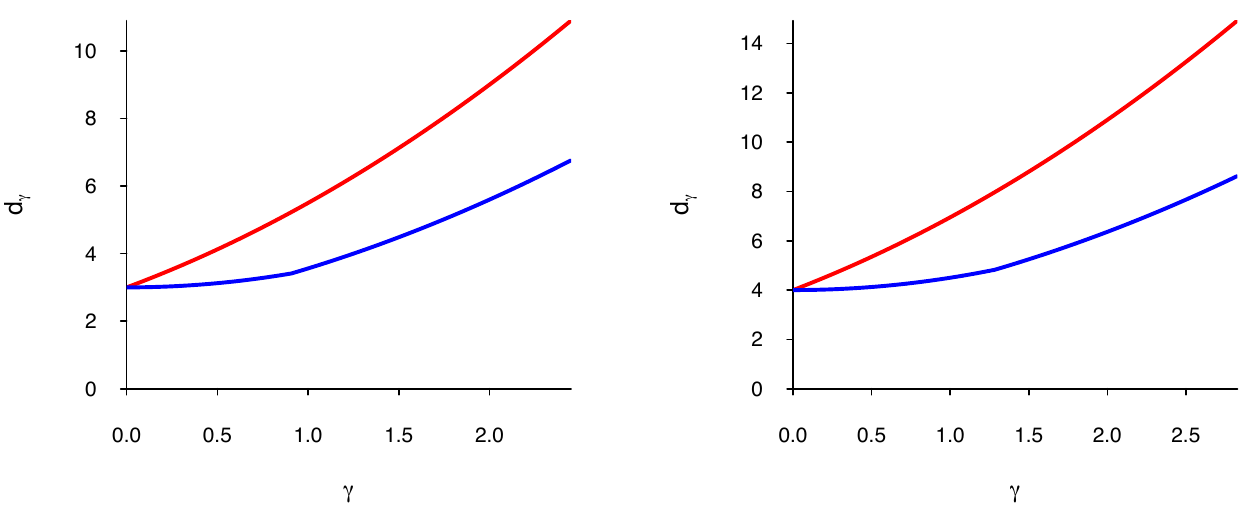}
\caption{\label{fig:bounds-d} Graph of the upper and lower bounds on $\mathsf d_\gamma$ from Corollary~\ref{cor}. \textbf{Left}: The case $d = 3$. \textbf{Right}: The case $d = 4$. 
}
\end{center}
\vspace{-3ex}
\end{figure}

\begin{remark}
We note that in \cite{cg-support}, the main theorem requires that the LQG dimension satisfies $\mathsf d_\gamma>d.$ Note that if we indeed have that $\lambda(d, \xi) = 1- \frac{\gamma}{\mathsf d_\gamma}(\frac{d}{\gamma}+\frac{\gamma}{2}),$ then this assumption follows from Corollary \ref{cor}.
\end{remark}

\begin{remark}
An interesting related question is whether the exponent $\lambda(d, \xi)$ is the same if we replace the log-correlated Gaussian field by a discrete counterpart. For instance, in the case $d = 4$, a natural discrete counterpart is the membrane model on $\mathbb{Z}^4$~\cite{Kurt-membrane}. This question is also related to the open question 4 in \cite[Section 13.1]{fgf-survey}. We plan to come back to this question in future work.
\end{remark}

\medskip
\noindent\textbf{Organization of the paper.} We will subsequently prove Lemma~\ref{lem:dimen-restriction} in Section~\ref{sec:restriction}, Lemma~\ref{lem:differential-bound} in Section~\ref{sec:differential}, and Proposition~\ref{prop:dimension-increase} in Section~\ref{sec:monotone}. 

\medskip
\noindent\textbf{Acknowledgements.} The authors thank Ewain Gwynne for suggesting the problem and for useful discussions. Z.Z.\ is partially supported by NSF grant DMS-1953848.

\section{Monotonicity of the distance exponent with respect to the dimension}\label{sec:restriction}

In this section, we prove Lemma~\ref{lem:dimen-restriction}. We first give a lemma that compares different mollifications of $h$. We assume that
\begin{equation}\label{eq:def-kernel}
    \mbox{$\rho$ is a probability measure on $\mathbb{R}^d$ such that $\iint_{\mathbb{R}^d \times \mathbb{R}^d} \log \frac{1}{|x-y|} \rho(dx) \rho(dy) < \infty$.}
\end{equation}
For $\epsilon>0$, let $h_{\epsilon, \rho} = h * \rho_\epsilon$, where $\rho_\epsilon(x) := \epsilon^{-d} \rho(\epsilon^{-1} x)$ for $x \in \mathbb{R}^d$. Then $h_{\epsilon, \rho}(x)$ can be defined simultaneously for $x \in \epsilon \mathbb{Z}^d$. It is easy to see that for $d \geq 2$, the uniform measure on a $d$-dimensional unit box $[-1,1]^d$ or on a $(d-1)$-dimensional unit box $[-1,1]^{d-1} \times \{0\}$ satisfies~\eqref{eq:def-kernel}. 

\begin{lemma}\label{lem:mollification}
    For two different measures $\rho_1, \rho_2$ satisfying~\eqref{eq:def-kernel}, let $h_{\epsilon,\rho_1}$ and $h_{\epsilon,\rho_2}$ denote the convolution of $h$ with respect to them. Then there exists a constant $C>0$ depending on $\rho_1,\rho_2$ such that with probability at least $1 - C \exp(-(\log \epsilon^{-1})^{4/3}/C)$, we have
    $$
    \sup_{x \in [0,1]^d \cap \epsilon \mathbb{Z}^d} |h_{\epsilon,\rho_1}(x) - h_{\epsilon,\rho_2}(x)| \leq (\log 
    \epsilon^{-1})^{2/3}.
    $$
\end{lemma}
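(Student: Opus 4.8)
The plan is to reduce the comparison of two arbitrary mollifications to a comparison of each $h_{\epsilon,\rho_i}$ with a fixed reference mollification, say $h_\epsilon$ (the box average over $[-\epsilon,\epsilon]^d$), and then to analyze the Gaussian field $G_\epsilon := h_{\epsilon,\rho_1} - h_{\epsilon,\rho_2}$ directly via a union bound over the roughly $\epsilon^{-d}$ lattice points in $\mathbbm{S}^\epsilon$ together with a Gaussian tail estimate. The key quantitative input is a bound on the variance of $G_\epsilon(x)$ that is $o\big((\log\epsilon^{-1})^{4/3}\big)$ — in fact $O(1)$ — uniformly in $x$; combined with Borell--TIS or just the one-point Gaussian tail plus a union bound, a variance of order $1$ immediately gives that $|G_\epsilon(x)|\le (\log\epsilon^{-1})^{2/3}$ simultaneously for all $x\in\mathbbm{S}^\epsilon$ except on an event of probability at most $C\epsilon^{-d}\exp(-(\log\epsilon^{-1})^{4/3}/C')$, which is of the claimed form once we absorb the polynomial prefactor (since $(\log\epsilon^{-1})^{4/3}$ dominates $d\log\epsilon^{-1}$).

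The first main step is therefore the variance computation. Writing $h_{\epsilon,\rho_i}(x) = (h, \rho_{i,\epsilon}(\cdot - x))$ — after the usual care that the test functions have the right average-zero normalization, which only shifts things by a deterministic additive constant controlled by the normalization of $h$ over the unit sphere — we have
\[
\mathrm{Var}\big(G_\epsilon(x)\big) = \iint \log\frac{1}{|y-z|}\,\big(\rho_{1,\epsilon}-\rho_{2,\epsilon}\big)(y-x)\,\big(\rho_{1,\epsilon}-\rho_{2,\epsilon}\big)(z-x)\,dy\,dz.
\]
Rescaling $y = x+\epsilon u$, $z = x+\epsilon v$ turns this into $\iint \log\frac{1}{\epsilon|u-v|}(\rho_1-\rho_2)(u)(\rho_1-\rho_2)(v)\,du\,dv$; the $\log\epsilon^{-1}$ term carries the factor $\big(\int(\rho_1-\rho_2)\big)^2 = (1-1)^2 = 0$ since both are probability measures, so it cancels exactly, and what remains is $\iint \log\frac{1}{|u-v|}(\rho_1-\rho_2)(u)(\rho_1-\rho_2)(v)\,du\,dv$, a finite constant depending only on $\rho_1,\rho_2$ by hypothesis~\eqref{eq:def-kernel} (expanding the product and using that each $\iint\log\frac1{|u-v|}\rho_i\rho_j<\infty$). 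This is where assumption~\eqref{eq:def-kernel} is used in an essential way.

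The second step is the probabilistic estimate. Since $G_\epsilon(x)$ is centered Gaussian with variance bounded by a constant $\sigma^2$ uniform in $x$ and $\epsilon$, $\mathbb P[|G_\epsilon(x)| > t] \le 2\exp(-t^2/2\sigma^2)$; taking $t = (\log\epsilon^{-1})^{2/3}$ gives $\exp(-(\log\epsilon^{-1})^{4/3}/2\sigma^2)$. A union bound over the $\#\mathbbm{S}^\epsilon \le C\epsilon^{-d}$ points yields a failure probability at most $2C\epsilon^{-d}\exp(-(\log\epsilon^{-1})^{4/3}/2\sigma^2) = 2C\exp\big(d\log\epsilon^{-1} - (\log\epsilon^{-1})^{4/3}/2\sigma^2\big)$, and since $(\log\epsilon^{-1})^{4/3}/(d\log\epsilon^{-1}) \to \infty$, this is bounded by $C'\exp(-(\log\epsilon^{-1})^{4/3}/C')$ for a suitable $C'$ depending on $\rho_1,\rho_2$. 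I expect the only genuinely delicate point to be the bookkeeping around the average-zero normalization of the test functions and the fixed additive constant coming from normalizing $h$ on the unit sphere — one must check these deterministic shifts are $O(1)$ (indeed they are identical for the part proportional to $\int\rho_{i,\epsilon}$ up to the common sphere-normalization constant, and cancel in the difference, or are at worst $O(1)$) so that they do not contaminate the $(\log\epsilon^{-1})^{2/3}$ bound; everything else is routine. If one prefers to avoid the union bound one can instead invoke a Borell--TIS / Dudley chaining argument using the same variance bound plus a crude Lipschitz-in-$x$ estimate for $G_\epsilon$ on $\epsilon\mathbb Z^d$, but the union bound is simpler and already gives a constant of the required form.
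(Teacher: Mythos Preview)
Your proposal is correct and follows exactly the paper's approach: the paper's proof is the two-line argument ``by~\eqref{eq:def-kernel}, $\mathbb{E}[(h_{\epsilon,\rho_1}(x) - h_{\epsilon,\rho_2}(x))^2] = O(1)$; the lemma then follows by taking a union bound,'' and you have simply unpacked both steps (the scaling computation showing the $\log\epsilon^{-1}$ term cancels because $\rho_1-\rho_2$ has total mass zero, and the absorption of the $\epsilon^{-d}$ prefactor into the stretched-exponential). Your remarks on the additive normalization and the optional Borell--TIS alternative are harmless elaborations beyond what the paper records.
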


\begin{proof}
    By~\eqref{eq:def-kernel}, we have $\mathbb{E}[(h_{\epsilon,\rho_1}(x) - h_{\epsilon,\rho_2}(x))^2] = O(1)$. The lemma then follows by taking a union bound.
\end{proof}

To prove Lemma~\ref{lem:dimen-restriction}, it suffices to show that $\lambda(d+1,\xi)\geq\lambda(d,\xi)$ if $d \geq 2$. Suppose that we embed $\R^d \subset \R^{d+1}$ via $(x_1, \ldots , x_d) \mapsto (x_1, \ldots , x_d , 0)$. Let $h$ be a log-correlated Gaussian field in dimension $d+1$, and let $\mathfrak h$ be a log-correlated Gaussian field in dimension $d$. Now, for $x \in \mathbb{S}^\epsilon$, define
$$
h_\epsilon(x) := \frac{1}{2^{d+1}\e^{d+1}}\int_{x + [-\epsilon, \epsilon]^{d+1}} h(z)dz.
$$
For $x \in \mathbb{S}^\epsilon \cap (\mathbb{R}^d \times \{0\})$, let $h'_\epsilon(x)$ be the field defined by
\[
h'_\epsilon(x):= \frac{1}{2^d\e^d}\int_{x + [-\epsilon, \epsilon]^d \times \{0\}} h(z)dz,
\]
and let
\[
\mathfrak h_\epsilon(x):= \frac{1}{2^d\e^d}\int_{x + [-\epsilon, \epsilon]^d \times \{0\}}\mathfrak h(z)dz.
\]
By Theorem 7.1 in~\cite{fgf-survey}, the restriction of $h$ to $\mathbb{R}^d$ has the same law as $\mathfrak h$ viewed as a function up to a global additive constant. Thus, there exists a coupling such that $h'_\epsilon(x)$ and $\mathfrak h_\epsilon(x)$ only differ by a random constant, denoted by $X$, which is independent of $\epsilon$. Furthermore, by Lemma~\ref{lem:mollification}, we know that with probability tending to 1 as $\epsilon \to 0$,
\[
  \sup_{x \in \mathbb{S}^\epsilon\cap (\R^d\times \{0\})} |h'_\epsilon(x) - h_\epsilon(x)| \leq (\log 
    \epsilon^{-1})^{2/3}.
\]
Now, let $P$ be the discrete path connecting the left and right boundaries of $\mathbb{S}^\epsilon \cap (\mathbb{R}^d \times \{0\})$ with minimal $D_{\mathfrak h}^{\e,\xi}$-length. Note that $P$ also connects the left and right boundaries of $\mathbb{S}^\epsilon$. Thus, on the above events, we have
\begin{align*}
D_h^{\e, \xi}(\partial_L \mathbbm{S}^\e, \partial_R \mathbbm{S}^\e;  \mathbbm{S}^\e) \leq D_h^{\e,\xi}(P) \leq e^{\xi ((\log 
    \epsilon^{-1})^{2/3} + X)} D_{\mathfrak h}^{\e,\xi}(P) 
\end{align*}
which together with the definition of $\lambda(d,\xi)$ proves Lemma~\ref{lem:dimen-restriction}.

\section{Bounds on \texorpdfstring{$\lambda'$}{'}}\label{sec:differential}

In this section, we prove Lemma~\ref{lem:differential-bound}. We fix $d \geq 3$ and use $\lambda(\xi)$ and $\lambda'(\xi)$ as shorthand for $\lambda(d, \xi)$ and $\lambda'(d, \xi)$, respectively. For any path $P$ in $\mathbbm{S}^\epsilon$ and $\widetilde \xi > \xi \geq 0$, we have
$$
\exp \big(-(\widetilde \xi - \xi) \sup_{x \in \mathbbm{S}^\epsilon} |h_\epsilon(x)| \big) L_h^{\epsilon, \widetilde \xi}(P) \leq L_h^{\epsilon, \xi}(P) \leq \exp \big((\widetilde \xi - \xi) \sup_{x \in \mathbbm{S}^\epsilon} |h_\epsilon(x)| \big) L_h^{\epsilon, \widetilde \xi}(P).
$$
Since $\mathbb{E}[h_\epsilon(x)^2] = -\log \epsilon +O(1)$, it follows that for any fixed $\delta>0$, $\sup_{x \in \mathbbm{S}^\epsilon} |h_\epsilon(x)| \leq - (\sqrt{2d} + \delta) \log \epsilon$ with probability at least $1-o_\epsilon(1)$. Therefore, $|\lambda(\widetilde \xi) - \lambda(\xi)| \leq \sqrt{2d}(\widetilde \xi - \xi)$, which implies that the function $\xi \to \lambda(\xi)$ is $\sqrt{2d}$-Lipschitz.

Next, we will prove that for Lebesgue-a.e. $\xi \geq 0$,
\begin{enumerate}[(i)]
    \item $-\xi \leq \lambda'(\xi) $; \label{diff-1}
    \item $\frac{1}{\xi}(\lambda(\xi) - 1) \leq \lambda'(\xi)$; \label{diff-2}
    \item $ \lambda'(\xi) \leq \sqrt{2(d-1)+2\lambda(\xi)+\xi^2} - \xi.$ \label{diff-3}
\end{enumerate}
The proof follows from arguments similar to those in the two-dimensional case. In particular, Claim~\eqref{diff-1} follows from \cite[Lemma 2.5]{dg-fractal-dim}, Claim~\eqref{diff-2} follows from \cite[Lemma 4.1]{gp-lfpp}, and Claim~\eqref{diff-3} follows from \cite[Theorem 2.1]{gp-lfpp}.

\subsection{Proof of Claim~\eqref{diff-1}}

We will prove that for any $\widetilde \xi > \xi \geq 0$, the inequality $\lambda(\widetilde \xi) \geq \lambda(\xi) + \frac{1}{2}(\xi^2 - \widetilde \xi^2)$ holds. It then follows that $0 \leq (\lambda(\xi) + \frac{1}{2} \xi^2)' = \lambda'(\xi) + \xi$, which implies Claim~\eqref{diff-1}. Fix $\widetilde \xi > \xi \geq 0$. Let $h$ and $h'$ be two independent log-correlated Gaussian fields on $\mathbb{R}^d$. Then,
$$
\widetilde h := \widetilde \xi^{-1} \Big( \xi h + \sqrt{\widetilde \xi^2 - \xi^2} h' \Big) \overset{d}{=} h.
$$
For any discrete path $P: \{1,2,\ldots, N\} \to \mathbbm{S}^\epsilon$, we have
\begin{align*}
    \mathbb{E}\Big[L_{\widetilde h}^{\epsilon, \widetilde \xi}(P) | h\Big] = \mathbb{E}\Big[ \sum_{i=1}^N \epsilon e^{\widetilde \xi \widetilde h_\epsilon(P(i))} | h \Big]&= \sum_{i=1}^N \epsilon e^{\xi h_\epsilon(P(i))} \mathbb{E} \Big[ e^{\sqrt{\widetilde \xi^2 - \xi^2} h'_\epsilon(P(i))} \Big].
\end{align*}
Since $\mathbb{E} [h'_\epsilon(x)^2] = - \log \epsilon + O(1)$, it follows that $\mathbb{E}[L_{\widetilde h}^{\epsilon, \widetilde \xi}(P) | h] \leq C \cdot L_h^{\epsilon, \xi}(P) \epsilon^{-\frac{1}{2}(\widetilde \xi^2 - \xi^2)}$ for some constant $C>0$ depending only on $\xi, \widetilde \xi$. Now, let $P$ be the path that minimizes the distance $D_h^{\epsilon, \xi}(\partial_L \mathbbm{S}^\epsilon, \partial_R \mathbbm{S}^\epsilon; \mathbbm{S}^\epsilon)$. Then, by Markov's inequality, for any fixed $\delta>0$, with probability at least $1 - o_\epsilon(1)$, we have
$$
D_{\widetilde h}^{\epsilon, \widetilde \xi}(\partial_L \mathbbm{S}^\epsilon, \partial_R \mathbbm{S}^\epsilon; \mathbbm{S}^\epsilon) \leq L_{\widetilde h}^{\epsilon, \widetilde \xi}(P) \leq D_h^{\epsilon, \xi}(\partial_L \mathbbm{S}^\epsilon, \partial_R \mathbbm{S}^\epsilon; \mathbbm{S}^\epsilon) \times \epsilon^{-\frac{1}{2}(\widetilde \xi^2 - \xi^2) - \delta}.
$$
This implies that $\lambda(\widetilde \xi) \geq \lambda(\xi) + \frac{1}{2}(\xi^2 - \widetilde \xi^2)$.

\subsection{Proof of Claim~\eqref{diff-2}}

For any $\widetilde \xi > \xi$, the function $x \to x^{\xi/\widetilde \xi}$ is concave. Thus, for any path $P$, we have $(\sum_{i=1}^N e^{\widetilde \xi h_\epsilon(P(i))})^{\xi/ \widetilde \xi} \leq \sum_{i=1}^N e^{\xi h_\epsilon(P(i))} $, which is equivalent to $(\epsilon^{-1} L_h^{\epsilon, \widetilde \xi}(P))^{\xi/ \widetilde \xi} \leq \epsilon^{-1} L_h^{\epsilon, \xi}(P)$. Combining this with~\eqref{eq:def-lambda} gives $$(\lambda(\widetilde \xi) - 1) \times \xi/ \widetilde \xi \geq \lambda(\xi)-1.$$ Therefore, $(\frac{\lambda(\xi)-1}{\xi})' \geq 0$ for Lebesgue-a.e. $\xi \geq 0$. After simplification, we obtain Claim~\eqref{diff-2}.

\subsection{Proof of Claim~\eqref{diff-3}}

We claim the following, and Claim~\eqref{diff-3} is a direct consequence of it.
\begin{prop}\label{upper}
Let $0 \leq \widetilde{\xi} \leq \xi,$ and let $\zeta>0$ be small. Then with probability tending to $1$ as $\e \to 0,$ each simple path $P$ in $[0,1]^d\cap \e \Z^d$ with $L_h^{\e,\xi}(P) \leq \e^{\lambda(\xi)-\zeta}$ satisfies
\[
L_h^{\e,\widetilde{\xi}}(P) \leq 2\e^{\lambda(\xi)-(\xi-\widetilde{\xi})(\sqrt{2(d-1)+2\lambda(\xi)+\xi^2} - \xi) - \zeta}.
\]
\end{prop}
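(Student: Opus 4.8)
The plan is to follow the strategy of \cite[Theorem 2.1]{gp-lfpp}, adapted to dimension $d$. The key point is a multiscale/counting estimate for the number of vertices $x$ on a path $P$ at which $h_\epsilon(x)$ is ``thick'' at a given level, combined with the observation that a path whose $L_h^{\epsilon,\xi}$-length is at most $\epsilon^{\lambda(\xi)-\zeta}$ cannot spend too much of its length near high values of the field. Concretely, for a level parameter $a \geq 0$, let $N_a(P)$ denote the number of vertices $x \in P$ with $h_\epsilon(x) \in [a\log\epsilon^{-1}, (a+\delta)\log\epsilon^{-1}]$ for a small mesh $\delta>0$ (one also needs a top bin for $h_\epsilon(x) \geq K \log\epsilon^{-1}$ with $K$ large, and the event $\sup_{x\in\mathbbm S^\epsilon}|h_\epsilon(x)| \leq (\sqrt{2d}+\delta)\log\epsilon^{-1}$ which holds w.h.p.). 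Then
$$
L_h^{\epsilon,\widetilde\xi}(P) \leq \epsilon \sum_a N_a(P)\, \epsilon^{-\widetilde\xi(a+\delta)} + (\text{top bin contribution}),
$$
and similarly $L_h^{\epsilon,\xi}(P) \geq \epsilon \sum_a N_a(P)\, \epsilon^{-\xi a}$. The hypothesis $L_h^{\epsilon,\xi}(P)\leq \epsilon^{\lambda(\xi)-\zeta}$ thus gives, for every $a$, the bound $N_a(P) \leq \epsilon^{\lambda(\xi)-1-\zeta}\,\epsilon^{\xi a}$; substituting this into the sum for $L_h^{\epsilon,\widetilde\xi}(P)$ yields a sum of terms $\epsilon^{\lambda(\xi)-\zeta - (\xi-\widetilde\xi)a - \widetilde\xi\delta}$, which is maximized over the feasible range of $a$.

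The second ingredient is an a priori upper bound on how large $a$ can be, i.e. how thick the field can get along a path that is forced to exist. Here one uses that the number of vertices in $\mathbbm S^\epsilon$ with $h_\epsilon(x) \geq a\log\epsilon^{-1}$ is, with high probability, at most $\epsilon^{-d + a^2/2 + o(1)}$ (a first-moment/Gaussian-tail estimate, using $\mathbb E[h_\epsilon(x)^2] = -\log\epsilon + O(1)$ and that there are $\epsilon^{-d}$ vertices). Combining this with a chaining/union bound over paths — a simple path of combinatorial length $\ell$ in $\mathbbm S^\epsilon$ exists with at most $\epsilon^{-Cd}$ choices up to its starting point, but crucially a path that connects $\partial_L$ to $\partial_R$ and has $N_a(P)$ thick vertices must have those vertices among the $\epsilon^{-d+a^2/2}$ available, so the relevant entropy is controlled — one obtains that, with high probability, every relevant $P$ has $N_a(P) = 0$ once $a$ exceeds a threshold determined by balancing the number of available thick points against the length budget. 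The precise threshold comes out of the optimization: the worst value of $a$ is $a_* = \sqrt{2(d-1) + 2\lambda(\xi) + \xi^2}$, at which point the exponent becomes $\lambda(\xi) - (\xi-\widetilde\xi)(a_* - \xi) - \zeta$ after absorbing the $O(\delta)$ and $o(1)$ errors into the loss of $\zeta$ in the exponent and the factor $2$ in front.

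The main obstacle is the entropy/union-bound step: one must show that among all simple paths $P$ with $L_h^{\epsilon,\xi}(P)\leq \epsilon^{\lambda(\xi)-\zeta}$ simultaneously (not just a fixed path), the thick-point count $N_a(P)$ obeys the stated bound. The resolution, exactly as in \cite{gp-lfpp}, is to note that the constraint $L_h^{\epsilon,\xi}(P) \leq \epsilon^{\lambda(\xi)-\zeta}$ is itself a strong constraint: it already bounds $N_a(P)$ deterministically in terms of $\epsilon^{\xi a}$, so one does not need to union bound over the field at all for the main estimate — the only probabilistic input is the high-probability bound $\sup|h_\epsilon| \leq (\sqrt{2d}+\delta)\log\epsilon^{-1}$, which caps the range of $a$ at $\sqrt{2d}+\delta$, together with the global count of thick points to cap $a$ at $\sqrt{2(d-1)+2\lambda(\xi)+\xi^2}$; whichever is smaller governs the bound, and one checks that in the regime of interest the latter is the binding one (which is why $d-1$ rather than $d$ appears — a path connecting the two opposite faces has at least $\epsilon^{-1}$ vertices, and the ``$-1$'' reflects this linear constraint inside the $d$-dimensional box, parallel to the $d=2$ argument). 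Once Proposition~\ref{upper} is established, Claim~\eqref{diff-3} follows by taking $P$ to be the near-geodesic realizing $D_h^{\epsilon,\xi}$, letting $\widetilde\xi\to\xi$, and dividing by $\xi-\widetilde\xi$.
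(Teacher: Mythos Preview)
Your proposal contains a genuine gap coming from a sign/direction error. You bin vertices by level $a \geq 0$ with $h_\epsilon(x)\in[a\log\epsilon^{-1},(a+\delta)\log\epsilon^{-1}]$, i.e.\ you focus on \emph{thick} points where the field is large and positive. But for $\widetilde\xi \leq \xi$ the ratio $e^{\widetilde\xi h_\epsilon(x)}/e^{\xi h_\epsilon(x)} = e^{-(\xi-\widetilde\xi)h_\epsilon(x)}$ is at most $1$ whenever $h_\epsilon(x) \geq 0$; the $\widetilde\xi$-length can exceed the $\xi$-length only at vertices where $h_\epsilon(x)$ is \emph{negative}. Your summation over $a \geq 0$ therefore omits precisely the vertices that matter. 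Correspondingly, your exponent should read $\lambda(\xi)-\zeta+(\xi-\widetilde\xi)a$, not $\lambda(\xi)-\zeta-(\xi-\widetilde\xi)a$; with the correct sign the $a\geq 0$ contribution is already dominated by $\epsilon^{\lambda(\xi)-\zeta}$ and no optimization is needed there. The actual worst level is at a \emph{negative} $a$ (thin points), and your claim that the maximizer is the positive number $a_*=\sqrt{2(d-1)+2\lambda(\xi)+\xi^2}$ is incorrect as written.

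The paper's argument is shorter and goes in the right direction. It fixes a single threshold $\alpha>0$ and splits $P$ into the vertices with $h_\epsilon<\alpha\log\epsilon$ (very negative) and those with $h_\epsilon\geq\alpha\log\epsilon$. On the first set, the only probabilistic input is a first-moment bound (Lemma~\ref{logbound}) showing that with high probability there are at most $\epsilon^{-(d-\alpha^2/2)-\zeta}$ such vertices in all of $\mathbbm S^\epsilon$; since $P$ is simple and each term is at most $\epsilon^{1+\widetilde\xi\alpha}$, this gives $A_1\leq \epsilon^{\widetilde\xi\alpha+\alpha^2/2-(d-1)-\zeta}$. On the second set one uses $e^{\widetilde\xi h_\epsilon}\leq \epsilon^{-(\xi-\widetilde\xi)\alpha}e^{\xi h_\epsilon}$ pointwise to get $A_2\leq \epsilon^{-(\xi-\widetilde\xi)\alpha}L_h^{\epsilon,\xi}(P)\leq \epsilon^{\lambda(\xi)-(\xi-\widetilde\xi)\alpha-\zeta}$. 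Choosing $\alpha=-\xi+\sqrt{\xi^2+2\lambda(\xi)+2(d-1)}$ equalizes the two bounds and yields the proposition. There is no multiscale binning, no entropy or union bound over paths, and no need for the event $\sup|h_\epsilon|\leq(\sqrt{2d}+\delta)\log\epsilon^{-1}$.
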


The main lemma in proving Proposition~\ref{upper} is the following.

\begin{lemma}\label{logbound}
For $\alpha>0,$ we have
\[
\mathbb{E}(\#\{z \in [0,1]^d \cap \e\Z^d: h_\e(z)< \alpha \log \e\}) = O_\e\left(\e^{-\left(d-\frac{\alpha^2}{2}\right)}\right).
\]
\end{lemma}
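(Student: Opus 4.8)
The plan is a one-line first-moment argument. The set $[0,1]^d\cap\e\Z^d$ has $O_\e(\e^{-d})$ elements, so by linearity of expectation it suffices to show that, uniformly over $z\in[0,1]^d$,
\[
\mathbb{P}[h_\e(z)<\alpha\log\e]=O_\e(\e^{\alpha^2/2}),
\]
since then $\mathbb{E}(\#\{\cdots\})\le O_\e(\e^{-d})\cdot O_\e(\e^{\alpha^2/2})=O_\e(\e^{-(d-\alpha^2/2)})$.

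To get the pointwise tail bound, I would use that $h_\e(z)$ is a centered Gaussian with $\sigma_\e^2:=\mathbb{E}[h_\e(z)^2]=-\log\e+O(1)$, the error being uniform over $z$ in the fixed compact set $[0,1]^d$ — this is exactly the variance estimate already invoked repeatedly in the previous sections, and it follows from the covariance kernel of the LGF together with the fact that $h_\e(z)$ is an average of $h$ over $z+[-\e,\e]^d$. Since $\alpha>0$ and $\e\in(0,1)$, we have $\alpha\log\e=-t$ with $t:=\alpha\log\e^{-1}>0$, so the standard Gaussian lower-tail inequality $\mathbb{P}[N(0,\sigma^2)\le -t]\le\exp(-t^2/(2\sigma^2))$ gives
\[
\mathbb{P}[h_\e(z)<\alpha\log\e]\le\exp\left(-\frac{\alpha^2(\log\e)^2}{2\sigma_\e^2}\right).
\]
Writing $L:=\log\e^{-1}$, we have $\sigma_\e^2=L+O(1)$ and hence $(\log\e)^2/\sigma_\e^2=L^2/(L+O(1))=L-O(1)$, so the right-hand side equals $\exp(-\tfrac{\alpha^2}{2}(L-O(1)))=O_\e(\e^{\alpha^2/2})$, which combined with the reduction above completes the argument.

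I do not expect any real obstacle here; the only point that needs a little care is ensuring the $O(1)$ error in the variance $\mathbb{E}[h_\e(z)^2]$ is uniform in $z$, so that the implicit constant in the final $O_\e$ is independent of the vertex (it will, of course, depend on $\alpha$, coming from the factor $e^{(\alpha^2/2)O(1)}$ after exponentiation). If one wishes to sidestep this entirely, it suffices to use the crude one-sided estimate $\sigma_\e^2\le-\log\e+c$ for small $\e$ and a fixed constant $c$, which yields the same exponent since we are not tracking sub-polynomial corrections.
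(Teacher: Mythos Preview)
Your proposal is correct and is exactly the argument the paper has in mind: the paper's proof consists of the single sentence ``The proof follows as in Lemma~3.1 in \cite{gp-lfpp} using that $\mathbb{E} h_\epsilon(z)^2 = -\log \epsilon +O(1)$,'' and what you wrote is precisely that first-moment-plus-Gaussian-tail computation spelled out.
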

\begin{proof}
The proof follows as in Lemma 3.1 in \cite{gp-lfpp} using that $\mathbb{E} h_\epsilon(z)^2 = -\log \epsilon +O(1)$.
\end{proof}
\begin{proof}[Proof of Proposition \ref{upper}]
By Lemma~\ref{logbound}, it holds with probability tending to $1$ as $\e\to 0$ that
\begin{equation}\label{small}
\# \{z \in[0,1]^d \cap \e \Z^d : h_\e(z)< \alpha \log \e\} \leq \e^{-\left(d-\frac{\alpha^2}{2}\right)-\zeta}.
\end{equation}
We will then assume that \eqref{small} holds. Let $P:\{0, \ldots , N\} \to [0,1]^d \cap \e \Z^d$ be a simple path with $L_h^{\e,\xi}(P) \leq \e^{\lambda(\xi)-\zeta}$. Then
\begin{eqnarray*}
L_h^{\e,\widetilde{\xi}}(P) &=& \sum_{j=0}^N \e e^{\widetilde{\xi}h_\e(P(j))} = \sum_{\{j:h_\e(P(j))<\alpha \log \e\}} \e e^{\widetilde{\xi}h_\e(P(j))} + \sum_{\{j:h_\e(P(j))\geq \alpha \log \e\}} \e e^{\widetilde{\xi}h_\e(P(j))}\\
&\leq & \e^{1+\alpha \widetilde{\xi}} \# \{j:h_\e(P(j))< \alpha \log \e\} + \sum_{\{j:h_\e(P(j))\geq \alpha \log \e\}} \e e^{\widetilde{\xi}h_\e(P(j))}\\
&:=& A_1+A_2.
\end{eqnarray*}
Since we are assuming \eqref{small} holds, we have that
\[
A_1 \leq \e^{\widetilde{\xi}\alpha +\frac{\alpha^2}{2} - (d-1) -\zeta}.
\]
Also, since $\widetilde{\xi} \leq \xi,$ if $h_\e(P(j))\geq \alpha \log \e,$ then
\[
e^{\widetilde{\xi}h_\e(P(j))} \leq \e^{-(\xi-\widetilde{\xi})\alpha} e^{\xi h_\e(P(j))}.
\]
Hence,
\[
L_h^{\e,\widetilde{\xi}}(P) \leq \e^{\widetilde{\xi}\alpha + \frac{\alpha^2}{2}-(d-1)-\zeta} + \e^{-(\xi-\widetilde{\xi})\alpha} L_h^{\e,\xi}(P) \leq \e^{\widetilde{\xi}\alpha + \frac{\alpha^2}{2}-(d-1)-\zeta} + \e^{\lambda(\xi)-(\xi-\widetilde{\xi})\alpha-\zeta}.
\]
Taking
\[
\alpha = -\xi + \sqrt{\xi^2 + 2\lambda(\xi)+2(d-1)}
\]
we conclude.
\end{proof}

\begin{proof}[Proof of Claim~\eqref{diff-3}]
By Proposition~\ref{upper}, for any $0 \leq \widetilde{\xi} \leq \xi$, the inequality $\lambda(\widetilde \xi) \geq \lambda(\xi)-(\xi-\widetilde{\xi})(\sqrt{2(d-1)+2\lambda(\xi)+\xi^2} - \xi)$ holds. Taking $\widetilde \xi$ to $\xi$ yields Claim~\eqref{diff-3}.
\end{proof}

\section{Well-definedness and monotonicity of the fractal dimension}\label{sec:monotone}

In this section, we prove Proposition~\ref{prop:dimension-increase}, which follows from arguments similar to the two-dimensional case in \cite{dg-fractal-dim}. For $\xi>0$, let $\widehat Q(\xi) = \frac{1}{\xi}(1 - \lambda(\xi))$. By the inequality $\lambda'(\xi) \geq \frac{1}{\xi}(\lambda(\xi)-1)$ in Lemma~\ref{lem:differential-bound}, we see that $\widehat Q(\xi)$ is a continuous and non-increasing function of $\xi$ on $(0,\infty)$. 

Define $\xi_c:= \sup \{ \xi >0 : Q(\xi) > \sqrt{2d} \}$. We first show that 
\begin{equation}\label{eq:strictly-decrease-Q}
    \mbox{the function $\xi \to \widehat Q(\xi)$ is strictly decreasing on $(0, \xi_c)$.}
\end{equation}To prove~\eqref{eq:strictly-decrease-Q}, it suffices to show that $\widehat Q'(\xi)<0$ for Lebesgue-a.e. $\xi \in (0,\xi_c)$. By Theorem~\ref{thm:bound}, we have $\lambda(\xi) \geq 0$. This is equivalent to $\xi \leq \frac{1}{\widehat Q(\xi)}$. Since $\widehat Q(\xi) > \sqrt{2d}$ for $\xi \in (0, \xi_c)$, it follows that $\widehat Q(\xi) > \xi$. Futhermore, by Lemma~\ref{lem:differential-bound}, $-\xi \leq \lambda'(\xi) = - \xi \widehat Q'(\xi) - \widehat Q(\xi)$. Combining these two inequalities, we conclude that $\widehat Q'(\xi)<0$ for Lebesgue-a.e. $\xi \in (0, \xi_c)$.

By~\eqref{eq:strictly-decrease-Q}, for each $\gamma \in (0, \sqrt{2d})$, there exists a unique $\xi \in (0,\xi_c)$ such that $\widehat Q(\xi) = \frac{d}{\gamma} + \frac{\gamma}{2}$, which implies the uniqueness of $\mathsf d_\gamma = \frac{\gamma}{\xi}$. Furthermore, it is easy to see that $\widehat Q(\xi), \xi,$ and $\mathsf d_\gamma$ are all continuous with respect to $\gamma$. Since $\widehat Q(\xi)$ is a strictly decreasing function of $\gamma$, it follows from~\eqref{eq:strictly-decrease-Q} that $\xi$ is a strictly increasing function of $\gamma$ on $(0,\sqrt{2d})$.

Finally, we show that $\mathsf d_\gamma$ is strictly increasing as a function of $\gamma$. To see this, it suffices to show that $\frac{{\rm d}\mathsf d_\gamma}{{\rm d} \xi} > 0 $ for Lebesgue-a.e. $\xi \in (0,\xi_c)$. This follows from the following calculation:
$$
    \frac{{\rm d}\mathsf d_\gamma}{{\rm d} \xi} = \frac{{\rm d} \left( \frac{1}{\xi} \Big(\widehat Q(\xi) - \sqrt{\widehat Q(\xi)^2 - 2d} \Big)\right)}{{\rm d} \xi} = -\frac{1}{\xi^2} \Big(\widehat Q(\xi) - \sqrt{\widehat Q(\xi)^2-2d}\Big)\Big(1 + \frac{\xi \widehat Q'(\xi)}{\sqrt{\widehat Q(\xi)^2-2d}}\Big).
$$
Using $\xi \widehat Q'(\xi) \leq \xi - \widehat Q(\xi)$ and $\xi \leq \frac{1}{\widehat Q(\xi)}$, we see that
$$
    1 + \frac{\xi \widehat Q'(\xi)}{\sqrt{\widehat Q(\xi)^2-2d}} \leq 1 + \frac{\frac{1}{\widehat Q(\xi)} - \widehat Q(\xi)}{\sqrt{\widehat Q(\xi)^2-2d}} < 0.
$$
Therefore, $\frac{{\rm d}\mathsf d_\gamma}{{\rm d} \xi}>0$ for Lebesgue-a.e. $\xi \in (0,\xi_c)$.

\bibliographystyle{plain}
\bibliography{ref}
\end{document}